%% file: main.tex
\documentclass{article}

\usepackage[preprint]{neurips_2026}

\usepackage[utf8]{inputenc} 
\usepackage[T1]{fontenc}    
\usepackage{hyperref}       
\usepackage{url}            
\usepackage{booktabs}       
\usepackage{amsfonts}       
\usepackage{nicefrac}       
\usepackage{microtype}      
\usepackage{xcolor}         

\input{macros.tex}

\title{No-regret optimization of time-varying bilevel problems}

\author{%
  Eliabelle Mauduit \thanks{Use footnote for providing further information
    about author (webpage, alternative address)---\emph{not} for acknowledging
    funding agencies.} \\
  Unité de Mathématiques Appliquées \\
  ENSTA \\
  Institut Polytechnique de Paris\\
  Palaiseau, 91120\\
  \texttt{eliabelle.mauduit@ensta.fr} \\
  \And
  Eloïse Berthier \\
  Unité d'Informatique et d'Ingénierie des Systèmes\\
  ENSTA\\
  Institut Polytechnique de Paris \\
  Palaiseau, 91120 \\
  \texttt{eloise.berthier@ensta.fr} \\
  \AND
  Andrea Simonetto \\
  Unité de Mathématiques Appliquées\\
  ENSTA \\
  Institut Polytechnique de Paris \\
  Palaiseau, 91120 \\
  \texttt{andrea.simonetto@ensta.fr} \\
}

\begin{document}

\maketitle

\begin{abstract}
Bilevel optimization problems arise in many applications where decisions must account for the optimal response of another system, such as in game-theoretic settings. However, these problems are notoriously challenging, as even linear bilevel programs are strongly NP-hard.
In this work, we consider bilevel optimization with a known upper-level objective and an unknown, potentially time-varying lower-level response, accessible only through noisy zeroth-order observations. We propose W-SparQ-BL, a Bayesian optimization framework that models the lower-level mapping using multi-output Gaussian processes and enables efficient optimization under uncertainty.
Our approach leverages a sparse, observation-based approximation to control the effect of noise and temporal variability, while requiring only limited access to additional information over time. We establish regularity results linking the lower-level response to standard RKHS assumptions for common kernels, including Matérn and squared exponential.
We prove that W-SparQ-BL achieves sublinear dynamic regret in both stationary and time-varying settings. Experiments on  representative time-varying game-theoretic problems demonstrate the effectiveness of our approach.
\end{abstract}

\section{Introduction}

In many modern applications, evaluating an objective function is costly, noisy, and must be done sequentially. Additionally, the objective function might depend on another unknown system's response. For instance, the electricity consumption of clients responds to energy prices proposed by a provider, and this response cannot be fully modeled by the provider optimizing its revenue.

This setting naturally calls for adaptive sampling strategies that balance exploration and exploitation. When structural assumptions on the objective can be encoded through a kernel, Gaussian Process (GP) regression~\citep{Rasmussen2006Gaussian} provides a principled framework for nonparametric modeling via posterior mean and uncertainty quantification. Two complementary perspectives coexist in the literature. In the Bayesian setting, the objective function $f$ is modeled as a sample path of a GP, $f \sim \mathcal{GP}(0,k)$, where the kernel $k$ encodes prior assumptions. In the frequentist setting, $f$ is assumed to belong to a reproducing kernel Hilbert space (RKHS) with bounded norm, and GP regression is used as a surrogate model. These viewpoints are fundamentally distinct: GP sample paths are almost surely rougher than RKHS functions~\citep{wahba1990spline}, yet both lead to effective algorithms. In this work, we propose to extend GP-based optimization strategies to time-varying bilevel problems.

\paragraph{Bayesian optimization and GP-UCB.}

Bayesian optimization leverages GP models to optimize black-box functions under limited evaluation budgets~\citep{Rasmussen2006Gaussian}. Given observations $f(x_1),\dots,f(x_T)$, the GP posterior provides a predictive distribution for any new input, enabling uncertainty-aware decision rules. A prominent example is \textbf{GP-UCB}~\citep{srinivas2012information}, which selects actions by maximizing an upper confidence bound and achieves sublinear regret when the objective is time-invariant.

\paragraph{Time variation and its limitations.}

In many real-world scenarios, however, the objective evolves over time. In such settings, classical bandit feedback is no longer sufficient to guarantee no-regret. Existing analyses typically introduce a variation budget
\begin{equation}\label{Eq:GeneralVarBudget}
V_T = \sum_{t=1}^{T-1} \|f_{t+1} - f_t\|_{\mathcal{H}_k},
\end{equation}
and regret bounds degrade with $V_T$. In fact, linear regret is unavoidable under certain variations regimes when only bandit feedback is available~\citep{bogunovic2016,imamura2020timevarying,deng2022weighted,DBLP:Zhoujournals/corr/abs-2102-06296}. These limitations motivate relaxing the strict bandit setting by allowing additional observations. Under H\"older-type temporal assumptions, \textbf{SparQ-GP-UCB}~\citep{mauduit2025time} and its windowed variant \textbf{W-SparQ-GP-UCB}~\citep{mauduit2025no} leverage sparse queries on previously observed inputs to recover sublinear dynamic regret guarantees in non-stationary environments.

\paragraph{Sequential decision-making with time-varying rewards.}

We consider the following general framework. At each time $t$, the learner selects an action $x_t \in \mathcal{D}$ and observes
\begin{equation}\label{Eq:BaseModel}
y_t = f(x_t,t) + \epsilon_t, \qquad \epsilon_t \sim \mathcal{N}(0,\sigma_t^2),
\end{equation}
where the noise variance may increase with time, reflecting the progressive obsolescence of past observations. The goal is to minimize the dynamic regret
\begin{equation}\label{Eq:DynamicRegret}
R_T = \sum_{t=1}^T \left(\max_{x \in \mathcal{D}} f_t(x) - f_t(x_t)\right).
\end{equation}
In this setting, GP models enable uncertainty-aware exploration, while additional feedback mechanisms compensate for temporal variations.

\paragraph{Sequential games.}

Beyond single-agent optimization, many applications involve interactions with strategic or reactive agents. As an example, we can cite sequential games, where the learner’s reward depends on the response of a follower. A commonly used formulation is the following, where at each round,
\begin{equation}\label{Eq:SeqGames}
\begin{aligned}
x_{t+1} \in \argmax_{x \in \mathcal{D}} \quad & f(x,y_t) \\
\text{s.t.} \quad & y_t = g(x_t,\theta_t),
\end{aligned}
\end{equation}
where $\theta_t$ represents the follower’s strategy. This formulation is closely related to Stackelberg games~\citep{bruckner2011stackelberg}. In the setting of~\cite{sessa2020learning}, the response function $g$ is assumed to lie in an RKHS and is learned via GP regression, leading to sublinear static regret
\begin{equation}\label{Eq:SessaRegret}
\tilde{R}_T = \max_{x \in \mathcal{D}} \sum_{t=1}^T f(x,y_t) - \sum_{t=1}^T f(x_t,y_t).
\end{equation}
Our first contribution studies this specific problem where we consider instead the dynamic regret~\eqref{Eq:DynamicRegret}, which is more appropriate for time-varying environments~\citep{jacobsendynamic}, and we exploit additional feedback through sparse additional queries.

\paragraph{Bilevel Bayesian optimization.} A more natural and widely studied formulation assumes that the follower's response is defined implicitly as
\begin{align*}
y_t^*(x) \in \argmin_{y \in \mathcal{Y}} g_t(x,y),
\end{align*}
leading to a bilevel optimization problem. Recent works such as BILBAO~\citep{ekmekcioglu2024bayesian} and BILBO~\citep{chew2025bilbo} propose GP-based approaches for such problems. On the one hand, BILBAO jointly optimizes the upper and lower levels of the bilevel problem using a joint GP and although no regret analysis is conducted, the method demonstrates good empirical performances. On the other hand, BILBO builds trusted sets of solutions on which it is easier to bound the regret to derive sublinear bounds. However, the assumptions on the lower level function $g_t$ ensuring RKHS structure of the argmin response map are not explicitly characterized. Both approaches are defined for time-invariant problems.

\paragraph{Contributions.}

In this work, we provide a theoretical and algorithmic framework for time-varying bilevel optimization under limited feedback. Our main contributions are as follows:
\begin{itemize}
    \item In Section~\ref{Sec:SequentialGames}, we provide an algorithm, based on \textbf{W-SparQ-GP-UCB}, which achieves sublinear dynamic regret for time-varying sequential games.
    \item In Section~\ref{Sec:bilevel}, we characterize conditions under which the argmin response map in bilevel problems belongs to an RKHS, with a comparison between Matérn and squared exponential kernels.
    \item We then extend \textbf{W-SparQ-GP-UCB} to bilevel problems, yielding the algorithm \textbf{W-SparQ-BL}, for which we establish sublinear dynamic regret guarantees under controlled temporal variation and sparse additional feedback.
    \item Finally, in Section~\ref{Sec:numerics}, we numerically evaluate the algorithms on time-varying problems.
\end{itemize}
Table~\ref{tab:tableContributions} positions the contributions with respect to existing results. To the best of our knowledge, this is the first work providing no-regret guarantees for time-varying bilevel Bayesian optimization with unknown lower-level objectives.

\begin{table}
    \centering
    \caption{Summary of existing \textit{no-regret} optimization algorithms for different online optimization problems, with time-invariant and time-varying versions, along with the present contributions in bold.\\}
    \begin{tabular}{l | m{9em} | m{9em}| m{9em}  } 
         & Bayesian optimization & Sequential games & Bilevel optimization\\ \hline 
        Time-invariant & GP-UCB \citep{srinivas2012information} &  StackelUCB \citep{sessa2020learning} &  \textbf{GP-UCBL (Section \ref{sec:tibi})}\\ \hline 
        Time-varying & W-SparQ-GP-UCB \citep{mauduit2025no} & \textbf{Section~\ref{Sec:SequentialGames}} & \textbf{W-SparQ-BL (Section~\ref{sec:tvbi})}\\ \hline 
    \end{tabular}
    \label{tab:tableContributions}
\end{table}

\section{Bayesian optimization of time-varying sequential games}\label{Sec:SequentialGames}

We first consider sequential decision-making problems involving strategic interactions, where a learner repeatedly interacts with an opponent whose behavior evolves over time, formalized in Problem~\eqref{Eq:SeqGames}. Our goal is to design a learning strategy that adapts to these temporal variations and tracks the sequence of optimal decisions. This setting corresponds to the sequential games framework studied in~\citep{sessa2020learning}, to which we compare throughout this section.

\subsection{Problem formulation}

At each round $t \geq 1$, the learner selects an action $x_t \in \mathcal{X}$, where $\mathcal{X} \subset \mathbb{R}^d$ is a convex compact set, and faces an opponent whose response depends on a latent type $\theta_t \in \Theta$, where $\Theta \subset \mathbb{R}^p$ is a discrete set of opponents. The learner receives a known reward
$f : \mathcal{X} \times \mathcal{Y} \to \mathbb{R}$, 
evaluated at the pair $(x_t, y_t)$, where the opponent’s response is given by
\begin{align*}
y_t = g(x_t,\theta_t) + \epsilon_t, \qquad \epsilon_t \sim \text{subG}(\sigma_t^2) ,
\end{align*}
with $g : \mathcal{X} \times \Theta \to \mathcal{Y}$ unknown, $\mathcal{Y} \subset \mathbb{R}^m$  convex compact and $\epsilon_t$ is the noise term, wich is sub-Gaussian with variance proxy~$\sigma_t^2$~\citep{buldygin1980sub}. Importantly, the opponent’s type $\theta_t$ is revealed only \emph{after} the learner selects $x_t$, so decisions must be made without knowledge of the current opponent state. A typical interaction is illustrated in Figure~\ref{Fig:SequentialGames}.

\begin{figure}
	\centering
	\includegraphics[scale=0.6]{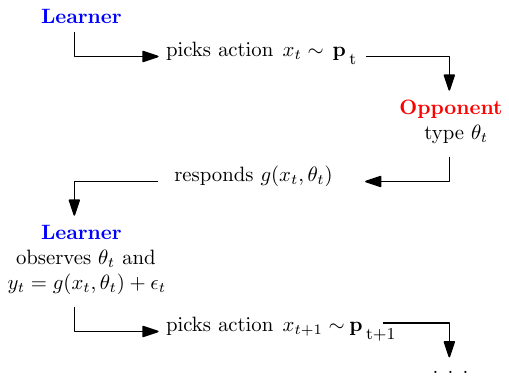}
	\caption{One step of a sequential game.}
	\label{Fig:SequentialGames}
\end{figure}

\paragraph{Baseline: static regret and adversarial opponents.}

In~\cite{sessa2020learning}, the sequence $(\theta_t)$ is allowed to be fully adversarial. As a consequence, the learner cannot anticipate future opponent behavior, and performance is measured using a \emph{static regret}
\begin{align*}
\tilde{R}_T = \max_{x \in \mathcal{X}} \sum_{t=1}^T f(x,y_t) - \sum_{t=1}^T f(x_t,y_t),
\end{align*}
which compares the learner to the best fixed action in hindsight. While this notion enables robustness to arbitrary opponent strategies, it ignores the fact that the optimal action typically depends on $\theta_t$ and may vary across rounds. In particular, it does not capture the learner’s ability to track a time-varying optimum. In contrast, we aim to compete against the sequence of optimal actions
\begin{align*}
x_t^* = \arg\max_{x \in \mathcal{X}} f(x, g(x,\theta_t)),
\end{align*}
and measure performance through the dynamic regret
\begin{align*}
R_T = \sum_{t=1}^T f(x_t^*, y_t) - \sum_{t=1}^T f(x_t, y_t).
\end{align*}

Achieving sublinear dynamic regret is impossible under fully adversarial opponent sequences~\citep{bogunovic2016}. To make this objective attainable, we introduce a stochastic model for the evolution of the opponent’s type.

\begin{assumption}[Opponent's type variations]\label{As:opponentVar}
Let $L_g$ denote the Lipschitz constant of $g$. There exists $\alpha \geq 0$ such that, for any $t_1 < t_2$,
\begin{align*}
\theta_{t_2} = \theta_{t_1} + \xi_{t_1,t_2},
\end{align*}
where $\xi_{t_1,t_2}$ is zero-mean sub-Gaussian with variance proxy
\begin{align*}
\frac{\sigma^2}{L_g^2} |t_2 - t_1|^\alpha,
\end{align*}
and, for fixed $t_2$, the variables $\{\xi_{t_1,t_2}\}_{t_1<t_2}$ are independent.
\end{assumption}

This assumption models a wide range of realistic scenarios where the opponent evolves smoothly over time (e.g., drifting preferences, adaptive agents, wear and tear, or physical systems with inertia). To make the time dependence explicit, we define $
g_t(x) = g(x,\theta_t)$.

\begin{proposition}\label{Prop:responseReg}
Under Assumption~\ref{As:opponentVar}, for all $t_1 < t_2$ and $x \in \mathcal{X}$,
\begin{align*}
g_{t_1}(x) = g_{t_2}(x) + v_{t_1,t_2},
\end{align*}
where $v_{t_1,t_2}$ is zero-mean sub-Gaussian with variance proxy $\sigma^2 |t_2 - t_1|^\alpha$.
\end{proposition}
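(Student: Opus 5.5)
We set $v_{t_1,t_2} := g_{t_1}(x) - g_{t_2}(x) = g(x,\theta_{t_1}) - g(x,\theta_{t_1}+\xi_{t_1,t_2})$, which is exactly the identity claimed, and then show this increment is sub-Gaussian with the stated variance proxy. The only available tools are the Lipschitz continuity of $g$ in its second argument (constant $L_g$) and the sub-Gaussianity of $\xi_{t_1,t_2}$ from Assumption~\ref{As:opponentVar}. Note that sub-Gaussianity here should not use $\sigma_t$; the proxy $\frac{\sigma^2}{L_g^2}|t_2-t_1|^\alpha$ is built so that the factor $L_g^2$ cancels.

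The key step is a Lipschitz transfer of sub-Gaussianity. Because $\theta_{t_1}$ and $x$ are fixed, $v_{t_1,t_2} = h(\xi_{t_1,t_2})$ with $h(\xi) := g(x,\theta_{t_1}) - g(x,\theta_{t_1}+\xi)$. The map $h$ is $L_g$-Lipschitz and satisfies $h(0)=0$. Pointwise this gives $|v_{t_1,t_2}| \le L_g\|\xi_{t_1,t_2}\|$, hence
\[
\mathbb{P}(|v_{t_1,t_2}|>s)\le \mathbb{P}(\|\xi_{t_1,t_2}\|>s/L_g).
\]
We then plug in the sub-Gaussian tail of $\xi$ with proxy $\frac{\sigma^2}{L_g^2}|t_2-t_1|^\alpha$. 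This yields a tail of order $\exp\bigl(-s^2/(2\sigma^2|t_2-t_1|^\alpha)\bigr)$, so $v_{t_1,t_2}$ has proxy $\sigma^2|t_2-t_1|^\alpha$, up to the usual absolute constants incurred when passing between tail and mgf characterizations. If $\xi$ is multivariate, we interpret its sub-Gaussianity as applying to the norm, or to each coordinate with a dimension factor absorbed in $\sigma$. A sharper route that avoids these constants is to assume the Gaussian concentration property (Tsirelson–Ibragimov–Sudakov) for $\xi$. Under that property, an $L_g$-Lipschitz function of $\xi$ is sub-Gaussian around its mean with proxy $L_g^2\cdot\frac{\sigma^2}{L_g^2}|t_2-t_1|^\alpha$, which is exactly the claim.

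The main obstacle is the \emph{zero-mean} part. For nonlinear $g$, $\mathbb{E}[h(\xi)]$ need not vanish even though $\mathbb{E}\xi=0$. I would first handle this by noting that in the relevant regime (e.g.\ $g$ affine in $\theta$, or a symmetric distribution of $\xi$ with $h$ odd) the mean is exactly zero. Otherwise I would interpret the statement as the centered increment being sub-Gaussian, observing that the mean is bounded by $L_g\mathbb{E}\|\xi\| = O(\sigma|t_2-t_1|^{\alpha/2})$ and can be absorbed into the proxy. Independence of $\{v_{t_1,t_2}\}_{t_1<t_2}$ for fixed $t_2$ is inherited directly from the independence of the $\xi_{t_1,t_2}$, since each $v$ is a measurable function of the corresponding $\xi$ only.
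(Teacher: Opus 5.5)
Your tail-transfer step is exactly the paper's proof: it bounds $|g_{t_2}(x)-g_{t_1}(x)|\le L_g|\theta_{t_2}-\theta_{t_1}|$, plugs in the sub-Gaussian tail $2\exp\bigl(-\delta^2/(2\sigma^2|t_2-t_1|^\alpha)\bigr)$ of $\xi_{t_1,t_2}$ at level $\delta/L_g$, and concludes. The caveats you raise are genuine and the paper's proof does not address them. A tail bound gives the variance proxy only up to absolute constants, and for $p>1$ the assumption must be read as a tail bound on $\|\xi_{t_1,t_2}\|$. The zero-mean claim can also fail for nonlinear $g$: for example, $g(x,\theta)=|\theta|$ with $\theta_{t_1}=0$ gives $v_{t_1,t_2}=-|\xi_{t_1,t_2}|$. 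So your proposal establishes everything the paper's argument does and is more candid about what remains unproved.
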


All the proofs are deferred to Appendix~\ref{App:proofs}.  This result shows that past observations provide noisy information about future responses, with a noise level increasing with the time lag. This structure is the key ingredient that allows us to leverage time-varying GP methods.

We now introduce standard regularity assumptions enabling GP regression.

\begin{assumption}\label{As:ResponseRegularity}
The response function $g$ belongs to an RKHS $\mathcal{H}_k$ induced by a positive definite kernel $k$, and $\|g\|_{\mathcal{H}_k} \leq B$.
\end{assumption}

\begin{assumption}\label{As:Lipschitz}
The reward function $f$ is $L_f$-Lipschitz in~$(x,y)$ with respect to $\|\cdot\|_1$.
\end{assumption}

\subsection{W-SparQ-GP-UCB for sequential games}

The key idea of our approach is to view the sequence $(g_t)_t$ as a time-varying function observed under a noise model with increasing variance:
\begin{align*}
\forall t_1 \leq t_2, \quad y_{t_1} = g_{t_2}(x_{t_1}) + \epsilon_{t_1,t_2},
\end{align*}
where $\epsilon_{t_1,t_2}$ has variance scaling as $\sigma^2(1 + (t_2 - t_1)^\alpha)$. This perspective allows us to directly apply the \textbf{W-SparQ-GP-UCB} framework~\citep{mauduit2025no}, where at some iterations, the learner is allowed to query the current opponent response at previously selected actions.
\begin{align*}
g_t(x_i), \quad i < t.
\end{align*}

These additional observations reduce uncertainty on the current response while keeping the overall number of queries sublinear.
To do so, time is partitioned into windows, and at the beginning of each window, a sparse subset of past actions is selected using a determinantal point process (DPP) mechanism~\citep{Kulesza_2012}. The learner then builds a GP posterior using both recent and additional observations, and selects actions according to the optimistic rule
\begin{align} \label{eqn:OptimisticRule}
x_t = \arg\max_{x \in \mathcal{X}} \max_{y \in [\mathrm{lcb}_t(x), \mathrm{ucb}_t(x)]} f(x,y).
\end{align}

The approach and the pseudo-code for this method are provided in Appendix~\ref{App:SeqGamesSparQ}.

\subsection{Dynamic regret analysis}

We now show that this approach achieves sublinear dynamic regret.

\begin{theorem}\label{Thm:BiLevelRegret}
Let~$0< \delta < 1$. Under Assumptions~\ref{As:ResponseRegularity}--\ref{As:Lipschitz}--\ref{As:opponentVar} on the learner reward~$f$ and opponent response~$g$, the proposed method with parameter~$0<\tilde{\alpha} < 1/3$ achieves a dynamic regret
\begin{align}\label{Eq:BLRegret}
R_T = \mathcal{O}\left(\sqrt{T^{\tilde{\alpha}+1} d \log^{d+1 }T\left(\log \frac{1}{\delta} + d^2  \log^{d+1}(\log T)\right)}\right).
\end{align}
\end{theorem}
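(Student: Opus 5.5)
The proof reduces the sequential game to the time-varying GP bandit setting of \textbf{W-SparQ-GP-UCB}~\citep{mauduit2025no} and then transfers the confidence bounds through the Lipschitz reward. The steps are taken in the following order.

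\emph{Step 1: noise model.} By Proposition~\ref{Prop:responseReg}, for $t_1\le t_2$ we have $y_{t_1}=g_{t_2}(x_{t_1})+\epsilon_{t_1}+v_{t_1,t_2}$. The two terms are independent and zero-mean sub-Gaussian, so the aggregated noise has variance proxy $\sigma_{t_1}^2+\sigma^2|t_2-t_1|^\alpha$. This matches the growing-noise observation model of W-SparQ-GP-UCB with Hölder exponent $\alpha$. Together with Assumption~\ref{As:ResponseRegularity} (which bounds each $g_t=g(\cdot,\theta_t)$ in $\mathcal{H}_k$ by $B$), all hypotheses of the confidence-bound result of~\citep{mauduit2025no} hold componentwise on the $m$ outputs of $g_t$.

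\emph{Step 2: confidence bound.} We obtain, with probability at least $1-\delta$ and uniformly in $t$ and $x$,
\[
|g_t(x)-\mu_{t-1}(x)|\le\beta_t^{1/2}\sigma_{t-1}(x),
\]
with $\beta_t$ of order $B+\sqrt{\log(1/\delta)+\gamma}$. We apply a union bound over the $m$ output coordinates, which only changes constants. The windowing and DPP sparsification with parameter $\tilde\alpha<1/3$ are used exactly as in~\citep{mauduit2025no}. They guarantee that the number of extra queries is sublinear and that the posterior built from recent and additional observations has effective information gain $\gamma$ controlled by the window size $T^{\tilde\alpha}$.

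\emph{Step 3: instantaneous regret.} On the good event, $g_t(x)\in[\mathrm{lcb}_t(x),\mathrm{ucb}_t(x)]$ for all $x$. The optimistic rule~\eqref{eqn:OptimisticRule} then gives
\[
f(x_t^*,g_t(x_t^*))\le\max_{y\in[\mathrm{lcb}_t(x_t^*),\mathrm{ucb}_t(x_t^*)]}f(x_t^*,y)\le f(x_t,\tilde y_t)
\]
for some $\tilde y_t$ in the confidence box at $x_t$. By Assumption~\ref{As:Lipschitz},
\[
r_t\le L_f\|\tilde y_t-g_t(x_t)\|_1\le 2L_f\beta_t^{1/2}\sum_{j}\sigma_{t-1,j}(x_t).
\]
The observation noise $\epsilon_t$ in $y_t$ adds a zero-mean term. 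Its sum concentrates as $\mathcal{O}(\sqrt{T\log(1/\delta)})$ by Azuma's inequality, which is absorbed in the bound.

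\emph{Step 4: summation.} Summing and applying Cauchy--Schwarz yields
\[
R_T\le 2L_f\sqrt{T\beta_T\sum_t\sigma_{t-1}^2(x_t)}.
\]
We then sum the posterior variances window by window. Within each window of length $\approx T^{\tilde\alpha}$, the noise inflation is bounded by $\sigma^2(1+T^{\tilde\alpha\alpha})$, and the variance sum is bounded by an information gain term. For Matérn/SE-type kernels this gives the $T^{\tilde\alpha}\, d\log^{d+1}T$ factor, while $\beta_T$ contributes $\log\frac1\delta+d^2\log^{d+1}(\log T)$. Multiplying these factors gives~\eqref{Eq:BLRegret}.

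The main obstacle is Step~4. We must check that the variance sum over windows (with DPP-selected additional queries and noise variances that differ across observations) yields exactly the $T^{\tilde\alpha+1}$ scaling rather than an extra $T^{\alpha\tilde\alpha}$ factor. We would try first to import the corresponding lemma of~\citep{mauduit2025no} verbatim, after verifying that the vector-valued, $\ell_1$-Lipschitz reduction only multiplies the bound by $mL_f$.
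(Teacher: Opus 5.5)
Your proposal takes the same route as the paper's proof. You recast past observations via Proposition~\ref{Prop:responseReg} as noisy evaluations of the current $g_t$ with variance proxy $\sigma^2(1+(t_2-t_1)^\alpha)$. You apply the heteroscedastic confidence bound (Lemma~7 of \cite{kirschner2018informationdirectedsamplingbandits}) and combine the optimistic rule with the $\ell_1$-Lipschitzness of $f$ to get $r_t\le 2L_f\beta_t\sigma_{t-1}(x_t)$. You then import the posterior-variance summation bound of W-SparQ-GP-UCB (Theorem~3.8 of \cite{mauduit2025no}) rather than re-deriving it, which is exactly what the paper does at your Step~4.

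Several details in your write-up are wrong, though none changes that route.

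\textbf{Window sizing.} By \eqref{eq:window-size} the windows have length about $t_j^{\tilde\alpha/\alpha}$, not $T^{\tilde\alpha}$, so the within-window noise proxy is $\sigma^2(1+\mathcal{O}(t^{\tilde\alpha}))$. It is this noise level, not the window length, that produces the factor $T^{\tilde\alpha}$ in $\sum_t\sigma_{t-1}^2(x_t)=\mathcal{O}(T^{\tilde\alpha}\gamma_T)$. Roughly, this is because $\sigma_{t-1}^2(x_t)\le(1+\lambda)\log(1+\lambda^{-1}\sigma_{t-1}^2(x_t))$ for noise level $\lambda$ and $k\le 1$. Your sentence obtaining $T^{\tilde\alpha}$ from an inflation of $T^{\alpha\tilde\alpha}$ is a non sequitur. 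The extra factor $T^{\alpha\tilde\alpha}$ you flag as the main obstacle is only an artifact of mis-sized windows.

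\textbf{The Azuma remark.} Drop it. The paper's proof measures regret on the noiseless responses, $R_T=\sum_t\max_x f(x,g_t(x))-f(x_t,g_t(x_t))$, so no observation-noise term appears. If one did appear, $f(x,g_t(x)+\epsilon_t)-f(x,g_t(x))$ is in general not zero-mean when $f$ is nonlinear in $y$. Azuma would then not give an $\mathcal{O}(\sqrt{T\log(1/\delta)})$ bound.

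\textbf{Kernel.} The factors $d\log^{d+1}T$ and $d^2\log^{d+1}(\log T)$, and the range $0<\tilde\alpha<1/3$, belong to the squared exponential kernel, which the paper's proof uses. Matérn kernels give a polynomial $\gamma_T$ and a different admissible range of $\tilde\alpha$ (cf.\ Theorem~\ref{Thm:WSPARQBLRegret}).

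\textbf{Scaling of $\beta_t$.} The paper's $\beta_t$ is already of order $B+\sqrt{\log(1/\delta)+\gamma}$. The width is therefore $2\beta_t\sigma_{t-1}(x_t)$, and Cauchy--Schwarz gives $R_T\le 2L_f\beta_T\sqrt{T\sum_t\sigma_{t-1}^2(x_t)}$. Writing $\beta_t^{1/2}$ with that same $\beta_t$ would lose a square root on the $\log(1/\delta)+\gamma$ factor.

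\textbf{Independence.} You should also state, as the paper does, that for fixed $t_2$ the aggregated noises $\epsilon_{t_1,t_2}$ are independent across $t_1$. This follows from the independence clause in Assumption~\ref{As:opponentVar}, and the heteroscedastic confidence bound relies on it.
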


The total number $N_T$ of additional queries  satisfies
\begin{align*}
 N_T = \mathcal{O}\big(T^{(\alpha-\tilde\alpha)/\alpha}\log^d T\big) = o(T),
\end{align*}
so the algorithm requires only minimal extra feedback beyond bandit observations, \textit{i.e.} a vanishing number of additional queries per iteration. As can be seen, the parameter $\tilde{\alpha}$ governs a tradeoff between the regret guarantees and the number of additional queries. Indeed, larger values of $\tilde{\alpha}$ lead to larger window sizes, and therefore fewer additional queries, but at the cost of a larger regret term, as shown in~\eqref{Eq:BLRegret}.

This result highlights a key trade-off: by relaxing the adversarial assumption and introducing controlled temporal variability, we can move from static to dynamic regret guarantees, enabling the learner to track the optimal action sequence over time.

\section{Bilevel optimization with unknown lower-level response} \label{Sec:bilevel}

We now extend the sequential game setting to the classical bilevel framework. 
In contrast with the previous section, the opponent's response is no longer an arbitrary function in an RKHS, but is defined implicitly as the solution of a parametric optimization problem. More precisely, at each step~$t$, the response $y_t^* \in \mathcal{Y} \subset \mathbb{R}^m$ satisfies
\[
y_t^*(x) \in \argmin_{y \in \mathcal{Y}} g_t(x,y),
\]
where $g_t : \mathcal{X} \times \mathcal{Y} \to \mathbb{R}$ is an unknown function. This extension introduces two main difficulties:
\begin{enumerate}
    \item The response is now \emph{vector-valued} and must be modeled using multi-output Gaussian processes.
    \item The regularity of the induced mapping 
     $ \tilde{g}_t : x \mapsto \argmin_{y \in \mathcal{Y}} g_t(x,y) $ 
    is not immediate and requires structural assumptions on $g_t$.
\end{enumerate}

The resulting bilevel problem reads
\begin{equation}\label{Eq:GeneralBiLevel}
\begin{aligned}
x_t^* \in \argmax_{x \in \mathcal{X}} \quad & f(x,y_t^*) \\
\text{s.t.} \quad & y_t^* \in \argmin_{y \in \mathcal{Y}} g_t(x,y),
\end{aligned}
\end{equation}
where the upper-level objective $f$ is known, while the lower-level function $g_t$ is unknown. Our goal is to extend the \textbf{W-SparQ-GP-UCB} methodology to this setting, and provide regret guarantees.

\subsection{Time-invariant bilevel problems} 

We first consider the stationary case where for all $t \geq 0$, $g_t \equiv g$:
\begin{equation}\label{Eq:StaticGeneralBiLevel}
\begin{aligned}
x^* \in \argmax_{x \in \mathcal{X}} \quad & f(x,y^*) \\
\text{s.t.} \quad & y^* \in \argmin_{y \in \mathcal{Y}} g(x,y).
\end{aligned}
\end{equation}

We assume access to noisy observations of $\tg$, that is, the response map that we aim to infer via GP regression
$\tilde{g}(x) := \argmin_{y \in \mathcal{Y}} g(x,y)$.

A central question is therefore: \emph{under which conditions on $g$ does $\tilde{g}$ belong to a suitable RKHS?}

Let $k$ be a positive definite kernel on $\mathbb{R}^d$. We denote by $\Hk_{k,\mathcal{X}}$ the RKHS restricted to~$\mathcal{X}$: $\Hk_{k,\mathcal{X}} := \{f|_{\mathcal{X}} : f \in \Hk_k(\mathbb{R}^d)\}$, equipped with the quotient norm. For vector-valued functions, we consider $\Hk_{k,\mathcal{X}}^m$ with norm $\|\tilde{g}\|_{\Hk_{k,\mathcal{X}}^m}^2 = \sum_{i=1}^m \|\tilde{g}_i\|_{\Hk_{k,\mathcal{X}}}^2$.

In the main text, results will be provided for the class of Matérn kernels. A similar analysis for the squared exponential (SE) kernel is conducted in Appendix~\ref{App:SEKernel}.

Let $l>0$ and $\nu>0$.  We consider Matérn kernels defined by
\begin{equation}\label{Eq:MaternKernel}
k_{\nu}(x,x') = \frac{2^{1-\nu}}{\Gamma(\nu)} \left( \frac{\|x-x'\|_2}{l} \right)^\nu K_\nu\left( \frac{\|x-x'\|_2}{l} \right),
\end{equation}
whose RKHS satisfies $\Hk_{k_\nu,\mathcal{X}} = H^{\nu + d/2}(\mathcal{X})$~\citep{wendland2004scattered}, where $H^s(\mathcal{Z})$ denotes the  Sobolev space of regularity~$s$ on $\mathcal{Z}$. We now  explicit  assumptions ensuring that $\tilde{g}$ is well-defined and regular.

\begin{assumption}\label{ass:matern}
Let $s := \nu + d/2$. The function $g$ satisfies:
\begin{enumerate}
    \item[\textup{(A1)}] \textbf{Uniform strong convexity:} there exists $\mu > 0$ such that
    \[
    \nabla^2_{yy} g(x,y) \succeq \mu I_m, \quad \forall (x,y) \in \mathcal{X} \times \mathcal{Y}.
    \]
    \item[\textup{(A2)}] \textbf{Sobolev regularity:}
    $g \in H^{s+1}(\mathcal{X} \times \mathcal{Y})$.
\end{enumerate}
\end{assumption}

\begin{proposition}[Argmin map regularity under Matérn assumptions]
\label{prop:matern}
Under Assumption~\ref{ass:matern}, the mapping $\tilde{g}$ satisfies:
\begin{enumerate}
    \item \textbf{Well-definedness:} $\tilde{g}(x)$ is uniquely defined for all $x \in \mathcal{X}$.
    \item \textbf{RKHS membership:} $\tilde{g} \in \Hk_{k_\nu,\mathcal{X}}^m$.
\end{enumerate}
\end{proposition}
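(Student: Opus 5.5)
The plan has two parts: establish well-definedness from strong convexity and compactness, then obtain Sobolev regularity of $\tilde{g}$ from the implicit function theorem combined with a composition argument in Sobolev spaces. Throughout, $\mathcal{X}$ and $\mathcal{Y}$ are assumed to have Lipschitz boundaries, so that Sobolev extension and embedding theorems apply. Membership in $\Hk_{k_\nu,\mathcal{X}}^m$ then follows from the identification $\Hk_{k_\nu,\mathcal{X}} = H^{s}(\mathcal{X})$, $s=\nu+d/2$, recalled before Assumption~\ref{ass:matern}, applied componentwise.

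\textbf{Well-definedness.} For each fixed $x$, (A1) makes $y\mapsto g(x,y)$ $\mu$-strongly convex on the convex set $\mathcal{Y}$. This requires $g$ to be $C^2$, which (A2) provides as soon as $s+1 > 2 + (d+m)/2$; I would state this as the standing regime, or else read (A1) in the weak sense. A continuous strongly convex function on a compact convex set has exactly one minimizer, so $\tilde{g}(x)$ is uniquely defined. Strong convexity also gives a first regularity estimate. Writing the first-order optimality conditions at $\tilde{g}(x)$ and $\tilde{g}(x')$, adding them, and applying strong monotonicity of $\nabla_y g(x,\cdot)$ yields
\[
\mu\|\tilde{g}(x)-\tilde{g}(x')\|\le \|\nabla_y g(x,\tilde{g}(x'))-\nabla_y g(x',\tilde{g}(x'))\|\le L\|x-x'\|.
\]
Hence $\tilde{g}$ is Lipschitz and in particular lies in $H^1(\mathcal{X})$.

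\textbf{Higher regularity.} I would bootstrap through the stationarity equation $F(x,y):=\nabla_y g(x,y)=0$. To avoid boundary effects I would assume, or reduce to, the case where the minimizer is interior to $\mathcal{Y}$; otherwise only Lipschitz regularity can hold in general, since projections onto constraints destroy smoothness. In the interior case, $\partial_y F=\nabla^2_{yy}g\succeq \mu I$ is invertible, and the implicit function theorem gives $D\tilde{g}(x) = -[\nabla^2_{yy}g(x,\tilde{g}(x))]^{-1}\nabla^2_{yx}g(x,\tilde{g}(x))$. By (A2), $\nabla^2 g\in H^{s-1}(\mathcal{X}\times\mathcal{Y})$. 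Inducting on $j$, suppose $\tilde{g}\in H^{j}$. Composition results in Sobolev spaces (Moser-type estimates for $u\mapsto G(x,u(x))$ with $G\in H^{r}$ and $u\in H^{r}\cap W^{1,\infty}$, valid when $r>(d+m)/2$ so that $H^r$ is an algebra) then give $\nabla^2 g(\cdot,\tilde{g}(\cdot))\in H^{\min(j,s-1)}$. Inversion preserves this class because the matrix is uniformly bounded below by $\mu$ and smooth matrix inversion acts on Sobolev algebras. Hence $D\tilde{g}\in H^{\min(j,s-1)}$, i.e.\ $\tilde{g}\in H^{\min(j+1,s)}$, and iterating reaches $\tilde{g}\in H^s(\mathcal{X})^m$.

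\textbf{Main obstacle.} The delicate step is the composition lemma, because $g$ has only Sobolev, not classical, regularity in the joint variables. The trace of an $H^{s-1}$ function along the graph $\{(x,\tilde{g}(x))\}$, a $d$-dimensional submanifold of $\mathbb{R}^{d+m}$, in general loses $m/2$ derivatives. I would first try to absorb this loss using the extra derivative assumed in (A2) together with the Sobolev embedding into $C^2$. If that is insufficient, I would fall back on an extension of $g$ to $\mathbb{R}^{d+m}$ and a change of variables $(x,y)\mapsto(x,y-\tilde{g}(x))$, which is bi-Lipschitz and $H^{s}$-regular by the induction hypothesis, to reduce the problem to a trace on the flat slice $\{y=0\}$. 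This route may force a strengthening of (A2) by $m/2$ derivatives, and I would state the proposition in whatever precise form the trace theorem permits.
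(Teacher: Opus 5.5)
Your outline follows the paper's route: uniqueness from (A1), the implicit function theorem on $\nabla_y g(x,\tilde g(x))=0$ with $D\tilde g=-[\nabla^2_{yy}g]^{-1}\nabla^2_{xy}g$, Sobolev regularity of the right-hand side, and the Matérn identification. Your uniqueness and Lipschitz steps are correct, but the proposal does not prove the statement. \textbf{First}, you add a hypothesis, interior minimizers, that is not in Assumption~\ref{ass:matern}. \textbf{Second}, the induction step relies on a composition lemma that is false in the form you use. Moser-type estimates for $x\mapsto G(x,u(x))$ need $G$ smooth (e.g.\ $C^{\lceil r\rceil}$ with bounded derivatives) in its second argument, not merely in $H^r(\mathcal X\times\mathcal Y)$ jointly. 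For jointly Sobolev $G$, the map $x\mapsto G(x,\tilde g(x))$ is a trace on a codimension-$m$ graph and in general lies only in $H^{r-m/2}$. So the claim $\nabla^2 g(\cdot,\tilde g(\cdot))\in H^{\min(j,s-1)}$ is unsupported. Your first remedy cannot work, because the extra derivative in (A2) is already used up by the formula for $D\tilde g$. Your last paragraph sees this but leaves it open, so the argument ends without $\tilde g\in H^s$. The paper's Step~3 makes exactly this leap without justification; its Step~2 also silently assumes $\nabla_y g(x,\tilde g(x))=0$ and infers $\mathcal{C}^s$ from $H^s$ data.

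Neither gap can be closed, because under (A1)--(A2) as stated the proposition is false. \emph{With constraints:} $g(x,y)=\tfrac12(y-x)^2$ on $\mathcal X=[-1,1]$, $\mathcal Y=[0,1]$ satisfies (A1) with $\mu=1$ and (A2) for every $s$, yet $\tilde g(x)=\max(x,0)\notin H^{s}(\mathcal X)$ once $s\ge 3/2$ (e.g.\ $\nu=3/2$, $d=1$, as in the experiments). \emph{With interior minimizers:} the $m/2$ loss you suspect is real. Let $h(x,p)=\tfrac12|p|^2+\varepsilon q(x,p)$ with $q\in H^{s+1}(\R^{d+m})$ compactly supported and $\varepsilon$ small, and let $g(x,\cdot)$ be the Legendre transform of $h(x,\cdot)$. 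For $s>(d+m)/2+1$, the map $(x,p)\mapsto(x,\nabla_p h(x,p))$ is an $H^s$-diffeomorphism whose inverse is $(x,y)\mapsto(x,\nabla_y g(x,y))$. This gives $g\in H^{s+1}$ on bounded sets and $\nabla^2_{yy}g\succeq\tfrac23 I$. It also gives $\tilde g(x)=\varepsilon\nabla_p q(x,0)$, which is interior when $\mathcal Y$ is a large ball. That is a flat trace of an $H^s$ function, and by sharpness and surjectivity of the trace theorem one can choose $q$ with $\nabla_p q(\cdot,0)\in H^{s-m/2}\setminus H^{s}$.

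So the statement itself must be amended, for instance by assuming $\tilde g(\mathcal X)\subset\mathrm{int}\,\mathcal Y$ together with one of the following:
\begin{itemize}
\item $g\in C^{\lceil s\rceil+1}$. The classical implicit function theorem then gives $\tilde g\in C^{\lceil s\rceil}(\bar{\mathcal X})\subset H^s(\mathcal X)$.
\item $g\in H^{s+1+m/2}$. The same duality then turns your curved-graph trace into a flat one: $\tilde g=Y(\cdot,0)$, where $(x,p)\mapsto(x,Y(x,p))$ inverts $(x,y)\mapsto(x,\nabla_y g(x,y))$.
\end{itemize}
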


\subsection{GP-UCB for time-invariant bilevel problems} \label{sec:tibi}

Under the previous assumptions, $\tilde{g} \in \Hk_{k,\mathcal{X}}^m$ with $\|\tilde{g}\| \leq B$. We can therefore model $\tilde{g}$ using a multi-output GP. Let $\boldsymbol{\mu}_t(x), \boldsymbol{\sigma}_t(x) \in \mathbb{R}^m$ denote posterior mean and standard deviation. Define confidence sets
$\tilde{\mathcal{U}}_{t}(x) = \left[\boldsymbol{\mu}_{t-1}(x) \pm \beta_t \boldsymbol{\sigma}_{t-1}(x)\right]$,
with
\begin{equation}\label{Eq:MOBeta}
\beta_t = \sqrt{2 \log \left( \frac{m}{\delta} \frac{|\Sigma_t + K_t|^{1/2}}{|K_t|^{1/2}} \right)} + B.
\end{equation}

The acquisition rule is
\begin{equation}\label{Eq:BiLevelRule}
x_t = \argmax_{x \in \mathcal{X}} \max_{y \in \tilde{\mathcal{U}}_t(x)} f(x,y).
\end{equation}

We refer to this algorithm as \textbf{GP-UCBL}.

\begin{theorem}\label{Thm:UCBLRegret}[Regret of GP-UCBL]
\label{thm:ucbl}
Take $0< \delta <1$ and some time horizon $T$. Let~$k$ be a Matérn and Assumption~\ref{ass:matern} hold. Assume $f$ is $L_f$-Lipschitz. Then, for $\beta_t$ defined in~\eqref{Eq:MOBeta}, with probability at least $1-\delta$, GP-UCBL achieves
\[
R_T = \mathcal{O}\left(m \sqrt{T \gamma_T \left(\gamma_T + \log \frac{1}{\delta}\right)}\right).
\]
\end{theorem}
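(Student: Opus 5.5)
The argument follows the standard GP-UCB template of \citet{srinivas2012information}, adapted to the optimistic bilevel rule~\eqref{Eq:BiLevelRule} as in StackelUCB~\citep{sessa2020learning}. Proposition~\ref{prop:matern} gives $\tilde g \in \Hk_{k_\nu,\mathcal{X}}^m$, and we assume $\|\tilde g\|\le B$. Each coordinate $\tilde g_i$ then lies in the scalar RKHS $\Hk_{k_\nu,\mathcal{X}}$ with $\|\tilde g_i\|\le B$, and the $m$ outputs are modeled by independent GPs sharing the kernel $k_\nu$. Here $\boldsymbol{\mu}_t$ and $\boldsymbol{\sigma}_t$ are the coordinatewise posterior mean and standard deviation built from the noisy observations $\tilde g(x_\tau)+\epsilon_\tau$.

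\textbf{Step 1: confidence bounds.} For each coordinate we apply the self-normalized RKHS concentration bound (Chowdhury--Gopalan / Abbasi-Yadkori type) with confidence level $\delta/m$. A union bound over the $m$ coordinates then gives, with probability at least $1-\delta$, that $|\tilde g_i(x)-\mu_{t-1,i}(x)|\le \beta_t\sigma_{t-1,i}(x)$ simultaneously for all $i$, $x$ and $t$. This is exactly where the $m/\delta$ inside $\beta_t$ in~\eqref{Eq:MOBeta} comes from. On this event, $\tilde g(x)\in\tilde{\mathcal U}_t(x)$ for every $x$ and $t$.

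\textbf{Step 2: instantaneous regret.} Let $x^*$ be optimal and let $\tilde y_t\in\tilde{\mathcal U}_t(x_t)$ be the maximizer attaining the inner max at $x_t$. Optimism of the rule~\eqref{Eq:BiLevelRule} together with Step~1 gives
\[
f(x^*,\tilde g(x^*))\le \max_{y\in\tilde{\mathcal U}_t(x^*)} f(x^*,y)\le f(x_t,\tilde y_t).
\]
Hence $r_t\le f(x_t,\tilde y_t)-f(x_t,\tilde g(x_t))$. Since $f$ is $L_f$-Lipschitz with respect to $\|\cdot\|_1$, this is at most $L_f\|\tilde y_t-\tilde g(x_t)\|_1$. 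Both $\tilde y_t$ and $\tilde g(x_t)$ lie in the box $\tilde{\mathcal U}_t(x_t)$, so we obtain
\[
r_t\le 2L_f\beta_t\sum_{i=1}^m\sigma_{t-1,i}(x_t).
\]

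\textbf{Step 3: summation.} Cauchy--Schwarz over $t$ gives
\[
R_T\le 2L_f\beta_T\sum_{i=1}^m\sqrt{T\sum_t\sigma^2_{t-1,i}(x_t)}.
\]
Since all outputs use the same kernel and the same inputs, the standard variance-sum lemma bounds each inner sum by $C\gamma_T$, where $C=2/\log(1+\sigma^{-2})$. This yields $R_T\le 2L_f m\beta_T\sqrt{CT\gamma_T}$. The log-determinant ratio in $\beta_T$ equals $\tfrac12\log|I+\sigma^{-2}K_T|\le\gamma_T$, so
\[
\beta_T^2=\mathcal{O}\left(\gamma_T+\log\tfrac{m}{\delta}+B^2\right).
\]
Substituting gives $R_T=\mathcal O\big(m\sqrt{T\gamma_T(\gamma_T+\log(1/\delta))}\big)$, with $\log m$ absorbed into the constants.

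\textbf{Main obstacle.} The delicate step is Step~1. Two points need care: the RKHS norm bound must hold on the restricted space $\Hk_{k,\mathcal X}$ equipped with the quotient norm, and the confidence bound must be uniform over the continuum $\mathcal X$. The first point is fine because the restricted space is itself the RKHS of $k|_{\mathcal X}$, so the scalar concentration inequality applies verbatim. The second is automatic because the self-normalized bound holds pointwise for every $x$ on a single event, so no discretization is needed. A further subtlety in Step~2 is that the inner maximization set $\tilde{\mathcal U}_t(x)$ must contain the true response, which is exactly what Step~1 guarantees.
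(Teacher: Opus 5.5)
Your proposal is correct and follows essentially the same route as the paper: per-coordinate confidence sets at level $\delta/m$ combined by a union bound (the paper cites Lemma~7 of Kirschner--Krause). Then the optimistic rule~\eqref{Eq:BiLevelRule} reduces $r_t$ to the width $2\beta_t\sum_i\sigma_{t-1,i}(x_t)$ of the box $\tilde{\mathcal U}_t(x_t)$, and the standard GP-UCB summation gives $\beta_T\sqrt{T\gamma_T}$ per coordinate.

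The differences are cosmetic. You carry out the Cauchy--Schwarz and variance-sum step explicitly instead of invoking Theorem~3 of Srinivas et al. You also read the determinant ratio in~\eqref{Eq:MOBeta} as $|\Sigma_t+K_t|/|\Sigma_t|$; the printed $|K_t|$ in the denominator is presumably a typo, and your reading is what gives $\beta_T^2=\mathcal O(\gamma_T+\log(m/\delta))$.
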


\paragraph{Remark.}
We can formulate a version of Theorem~\ref{Thm:UCBLRegret} for the SE kernel using Ass.~\ref{ass:rbf}. For both Matérn with $\nu > \frac{d(d+1)}{2}$ and SE kernels, $\gamma_T = o\left(T^{1/2}\right)$, implying $R_T = o(T)$~\citep{srinivas2012information}.

\subsection{W-SparQ-GP-UCB for time-varying bilevel problems} \label{sec:tvbi}

We now extend the previous bilevel setting to the time-varying case and introduce \textbf{W-SparQ-BL}, a windowed sparse variant of \textbf{GP-UCBL} tailored to handle temporal variations in the lower-level problem. We consider the bilevel problem~\eqref{Eq:GeneralBiLevel}, where the sequence $(g_t)_{t \geq 1}$ evolves over time. At each round $t$, we observe noisy evaluations of the lower-level function, which induce observations of the response map
$y^i_t = \tilde{g}^i_t(x_t) + \varepsilon^i_t$, where $\tilde{g}_t(x) := \argmin_{y \in \mathcal{Y}} g_t(x,y)$. As in the core W-SparQ framework (see Appendix~\ref{App:SeqGamesSparQ}), we assume a time-increasing noise model:
\[
\varepsilon^i_t \sim \text{subG}(\sigma_t^2), \qquad \sigma_t^2 \underset{t \to +\infty}{\longrightarrow} +\infty,
\]
which captures the fact that past observations become progressively less informative about the current function. To control this effect, we introduce the same windowing and querying mechanism as in W-SparQ-GP-UCB to keep only recent observations, allowing to maintain the variance proxy level~$\mathcal{O}(t^{\tilde\alpha})$.

We work with a Matérn-$\nu$ kernel $k$ and seek conditions ensuring that all response maps $(\tilde{g}_t)$ remain in a common RKHS ball. Analysis elements for SE kernel in the time-varying framework are given in Appendix~\ref{App:SETV}.

\begin{assumption}[Uniform regularity of $(g_t)$]
\label{ass:MaternRKHSBall}
Let $s := \nu + d/2$. The sequence $(g_t)$ satisfies:
\begin{enumerate}
    \item[\textup{(A1)}] \textbf{Uniform strong convexity:}
    $\nabla^2_{yy} g_t(x,y) \succeq \mu I_m, \quad \forall t, \; (x,y) \in \mathcal{X} \times \mathcal{Y}.$
    \item[\textup{(A2)}] \textbf{Uniform Sobolev control:}
    $\sup_{t \geq 1} \|g_t\|_{H^{s+1}(\mathcal{X} \times \mathcal{Y})} \leq M$.
\end{enumerate}
\end{assumption}

Under these assumptions, each $\tilde{g}_t$ is well-defined and inherits Sobolev regularity.

\begin{proposition}[Uniform RKHS control of $(\tilde{g}_t)$]
\label{prop:uniform_rkhs}
Under Assumption~\ref{ass:MaternRKHSBall}, there exists $B < +\infty$ such that $\sup_{t \geq 1} \|\tilde{g}_t\|_{\Hk_{k,\mathcal{X}}^m} \leq B$.
\end{proposition}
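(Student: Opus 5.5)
The plan is to rerun the argument behind Proposition~\ref{prop:matern} while tracking every constant, and to check that each constant depends on $g_t$ only through $\mu$ and $M$. The RKHS norm on $\Hk_{k,\mathcal{X}}$ is equivalent to the $H^s(\mathcal{X})$ norm, with constants depending only on $k$ and $\mathcal{X}$. It therefore suffices to show that $\sup_t \|\tilde g_t^i\|_{H^s(\mathcal{X})}\le C(\mu,M,\mathcal{X},\mathcal{Y},s,d,m)$ for each component $i$, and then to sum over $i\le m$.

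\textbf{Step 1 (uniform well-posedness and uniform bound).} Fix $t$. By (A1), $g_t(x,\cdot)$ is $\mu$-strongly convex on the convex compact set $\mathcal{Y}$, so $\tilde g_t(x)$ is unique. Since $\tilde g_t(x)\in\mathcal{Y}$, we get $\|\tilde g_t\|_{L^\infty}\le \mathrm{diam}$-type bounds that are independent of $t$, and hence a uniform $L^2(\mathcal{X})$ bound. I will take $s+1>(d+m)/2+2$ (assuming $\nu$ is large enough, as in Proposition~\ref{prop:matern}). Sobolev embedding then gives $\|g_t\|_{C^2}\le C_{\mathrm{emb}} M$, uniformly in $t$, with $C_{\mathrm{emb}}$ depending only on the domain.

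\textbf{Step 2 (implicit function, uniform derivative bounds).} I will assume, as in the static proof, that the minimizer is interior, so that $\nabla_y g_t(x,\tilde g_t(x))=0$. The implicit function theorem gives
\[
D\tilde g_t(x) = -\big[\nabla^2_{yy}g_t\big]^{-1}\nabla^2_{yx}g_t\,\big(x,\tilde g_t(x)\big).
\]
The inverse Hessian has operator norm at most $1/\mu$ by (A1), and $\nabla^2_{yx}g_t$ is bounded by $C_{\mathrm{emb}}M$. So $\tilde g_t$ is Lipschitz with a constant independent of $t$. Differentiating this identity repeatedly, each derivative $D^\beta\tilde g_t$ with $|\beta|\le s$ is a polynomial in $[\nabla^2_{yy}g_t]^{-1}$, in derivatives of $g_t$ of order at most $|\beta|+1$ evaluated along the graph $x\mapsto(x,\tilde g_t(x))$, and in lower-order derivatives of $\tilde g_t$. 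By induction on $|\beta|$, the $L^2(\mathcal{X})$ norm of $D^\beta\tilde g_t$ is bounded by a constant depending only on $\mu$, the Lipschitz constant, and $\|g_t\|_{H^{s+1}}\le M$.

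\textbf{Step 3 (main obstacle: composition/trace estimates).} The delicate point is controlling the top-order term $\|(D^{s+1}g_t)(\cdot,\tilde g_t(\cdot))\|_{L^2(\mathcal{X})}$, which involves the trace of a Sobolev function on a Lipschitz graph, and handling fractional $s$. I would first use the Lipschitz change of variables on the graph, whose Jacobian is bounded uniformly in $t$ by Step 2. Combined with standard Moser-type composition estimates in $H^s$, valid for $s>d/2$, this gives bounds of the form $C(\text{Lip},\|\tilde g_t\|_{L^\infty})\,\|g_t\|_{H^{s+1}}$. For non-integer $s$, I would interpolate between the neighbouring integer orders. The key bookkeeping throughout is that every constant in the static proof of Proposition~\ref{prop:matern} is monotone in $\|g\|_{H^{s+1}}$ and $1/\mu$. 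Replacing these two quantities by $M$ and $1/\mu$ yields a $t$-independent $B$, which completes the argument.
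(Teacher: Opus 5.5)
Your plan is the route the paper implicitly takes: it gives no separate argument beyond applying Proposition~\ref{prop:matern} to each $g_t$. You also correctly single out the step that Step~3 of that proof asserts in one sentence. But that step is not merely delicate; it is false.

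The top-order term $(D^{s+1}g_t)(x,\tilde g_t(x))$ is the restriction of a function that is only in $L^2(\mathcal{X}\times\mathcal{Y})$ to a $d$-dimensional graph of codimension $m$. Such a restriction is not even defined, let alone bounded in $L^2(\mathcal{X})$ by $\|g_t\|_{H^{s+1}}$. In general, traces of $H^r$ functions on codimension-$m$ sets lose $m/2$ derivatives, and they do not exist for $r\le m/2$.

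Neither of your proposed tools closes this gap:
\begin{itemize}
\item A Lipschitz change of variables along the graph converts graph integrals into integrals over $\mathcal{X}$, never into integrals over $\mathcal{X}\times\mathcal{Y}$.
\item Moser-type estimates control $F(u)$ when the outer $F$ is smooth and the inner $u$ is Sobolev. Here it is the outer function that is rough.
\end{itemize}
The same objection applies to every term of your Step~2 induction involving $D^jg_t$ with $j\ge s+1-m/2$.

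This is a genuine obstruction, not a technicality. With $g_t\equiv g$ the statement fails even under the extra hypotheses you add (interior minimizers, $H^{s+1}\subset C^2$). Take $d=m=1$, $\nu=5/2$ (so $s=3$), $\mathcal{X}=[-\tfrac12,\tfrac12]$, $\mathcal{Y}=[-1,1]$ and
\[
g(x,y)=\tfrac12(y-x)^2+\varepsilon\,(x^2+y^2)^{13/8},\qquad \varepsilon>0 .
\]
\begin{itemize}
\item Convexity of $(x^2+y^2)^{13/8}$ gives $\partial_{yy}g\ge 1$.
\item The fourth derivatives of this term are homogeneous of degree $-3/4$, hence square-integrable in the plane. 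So $g\in H^4=H^{s+1}$, and $g\in C^3$.
\item The stationarity equation $y\bigl(1+\tfrac{13}{4}\varepsilon(x^2+y^2)^{5/8}\bigr)=x$ has its root in $[-|x|,|x|]$, so the minimizer is interior.
\item Substituting $y=x\lambda$ gives $\tilde g(x)=x\,\Lambda(|x|^{5/4})$, with $\Lambda$ analytic near $0$, $\Lambda(0)=1$ and $\Lambda'(0)=-\tfrac{13}{4}2^{5/8}\varepsilon\neq 0$.
\item Hence $\tilde g(x)=x+\Lambda'(0)\,x|x|^{5/4}+R(x)$ with $R\in H^3(\mathcal{X})$, while the third derivative of $x|x|^{5/4}$ is $\tfrac{45}{64}|x|^{-3/4}\notin L^2$.
\end{itemize}
Therefore $\tilde g\notin H^3(\mathcal{X})=\Hk_{k_{5/2},\mathcal{X}}$. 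This is exactly the trace loss: $D^4g$ restricted to the graph is of order $|x|^{-3/4}$.

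To obtain a provable statement, you must strengthen (A2) so that restriction to a Lipschitz graph costs nothing. For example, assume $\sup_t\|g_t\|_{C^{k+1}(\mathcal{X}\times\mathcal{Y})}\le M$ for an integer $k\ge s$. Then your Fa\`a di Bruno induction gives $\|D^\beta\tilde g_t\|_{L^\infty}\le C(\mu,M,k)$ for $|\beta|\le k$. That yields a uniform $H^k(\mathcal{X})\subset H^s(\mathcal{X})$ bound, and the desired $B$ follows by norm equivalence.

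You should also keep interiority of the minimizers as an explicit hypothesis, since it is genuinely needed. For instance, $g=\tfrac12(y-x)^2$ on $[-1,1]\times[0,1]$ gives $\tilde g(x)=\max(x,0)\notin H^{3/2}$.
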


We now quantify how fast the response maps may vary over time.

\begin{proposition}[Temporal variation bound]
\label{Prop:BiLevelTemporalVar}
Assume Proposition~\ref{prop:uniform_rkhs} holds and that $k(x,x) \leq M_k$. Then for all $n \geq 1$,
\[
\|\tilde{g}_{t+n}^i - \tilde{g}_t^i\|_\infty 
\;\leq\; 2 n B \sqrt{M_k}, \qquad \forall i=1,\dots,m.
\]
\end{proposition}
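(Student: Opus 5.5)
The plan is to combine the reproducing property with the Cauchy--Schwarz inequality in $\Hk_{k,\mathcal{X}}$, and then to telescope over unit time steps so that the factor $n$ appears. The bound is crude (a direct comparison would even avoid the factor $n$), but the telescoping form is the one matching the stated constant, so that is what I will aim for.

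\textbf{Step 1 (pointwise evaluation bound).} Fix $i$ and $t$. By Proposition~\ref{prop:uniform_rkhs}, $\tilde g_t^i\in\Hk_{k,\mathcal{X}}$ and $\|\tilde g_t^i\|_{\Hk_{k,\mathcal{X}}}\le\|\tilde g_t\|_{\Hk^m_{k,\mathcal{X}}}\le B$. Because $\Hk_{k,\mathcal{X}}$ carries the quotient norm, for any $\varepsilon>0$ there is an extension $h\in\Hk_k(\mathbb{R}^d)$ with $h|_{\mathcal{X}}=\tilde g^i_t$ and $\|h\|_{\Hk_k}\le B+\varepsilon$. (Equivalently, $\Hk_{k,\mathcal{X}}$ is itself the RKHS of $k|_{\mathcal{X}\times\mathcal{X}}$.) For $x\in\mathcal{X}$, the reproducing property and Cauchy--Schwarz give
\[
|\tilde g^i_t(x)|=|\langle h,k(\cdot,x)\rangle_{\Hk_k}|\le \|h\|_{\Hk_k}\sqrt{k(x,x)}\le (B+\varepsilon)\sqrt{M_k}.
\]
Letting $\varepsilon\to0$ yields $\|\tilde g^i_t\|_\infty\le B\sqrt{M_k}$ uniformly in $t$.

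\textbf{Step 2 (one-step increment).} By the triangle inequality,
\[
\|\tilde g^i_{s+1}-\tilde g^i_s\|_\infty\le \|\tilde g^i_{s+1}\|_\infty+\|\tilde g^i_s\|_\infty\le 2B\sqrt{M_k}.
\]
Alternatively, apply the Step~1 argument to the difference, using $\|\tilde g^i_{s+1}-\tilde g^i_s\|_{\Hk}\le 2B$. Either route gives the same constant.

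\textbf{Step 3 (telescoping).} Write $\tilde g^i_{t+n}-\tilde g^i_t=\sum_{s=t}^{t+n-1}(\tilde g^i_{s+1}-\tilde g^i_s)$. Applying Step~2 to each of the $n$ terms gives $\|\tilde g^i_{t+n}-\tilde g^i_t\|_\infty\le 2nB\sqrt{M_k}$, which is the claim for every $i$.

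\textbf{Main obstacle.} The only delicate point is justifying the reproducing-property bound for the restricted space with its quotient norm, which is what Step~1 handles. The factor $n$ is loose but harmless, and it matches the form of the stated bound, so nothing further is needed.
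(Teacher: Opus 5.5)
Your proof is correct and uses the same ingredients as the paper's proof: the reproducing property with Cauchy--Schwarz and $k(x,x)\le M_k$, the uniform bound $B$ combined with the triangle inequality, and telescoping over $n$ unit steps. The only difference is the order (the paper telescopes in the $\mathcal{H}$-norm and applies the evaluation bound once to $\tilde g^i_{t+n}-\tilde g^i_t$, while you apply it first and telescope in the sup norm), and your explicit treatment of the quotient norm on $\mathcal{H}_{k,\mathcal{X}}$ and your remark that the factor $n$ is unnecessary are sound refinements the paper omits.
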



The  \textbf{W-SparQ-BL} algorithm is described in Algorithm~\ref{AppAlg:W-SparQ-BL}. We apply \textbf{W-SparQ-GP-UCB} \emph{element-wise} to estimate each coordinate of $\tilde{g}_t$. At each time step:
\begin{itemize}
    \item a windowed dataset $(\Xr_t, \Yr_t)$ is maintained,
    \item sparse pseudo-observations are added at the beginning of each window,
    \item a multi-output GP posterior $(\boldsymbol{\mu}_t, \boldsymbol{\sigma}_t)$ is constructed.
\end{itemize}
Finally, the next action is selected via
\begin{equation}\label{Eq:NextIt}
x_{t+1} = \argmax_{x \in \mathcal{X}} \max_{y \in \tilde{\mathcal{U}}_t(x)} f(x,y).
\end{equation}

\begin{algorithm}
	\caption{W-SparQ-BL}\label{AppAlg:W-SparQ-BL}
	\begin{algorithmic}[1]
		\Require Action set $\mathcal{X}$, kernel $k$, parameters $\{\beta_t\}_{t\geq1},\tilde\alpha$
        \Ensure Sequence of selected actions $\{x_t\}_{t\in\mathbb{N}}$
        \For{$t = 1,\dots$}
			\If{Beginning of a window}
			\State Perform sparse inference on~$X_t$ to obtain~$\Xs_t$
			\State Get updated outputs~$\Ys_t$
			\State Form posterior on~$\tg_{t+1}$ by Bayesian update on~$(\Xs_t, \Ys_t)$ 
			\State Initialize~$(\Xr_t,\Yr_t) \leftarrow (\Xs_t, \Ys_t)$
			\Else
			\State Append~$(x_t, y_t)$ to~$(\Xr_t, \Yr_t)$
			\State Form posterior by Bayesian update on~$(\Xr_t, \Yr_t)$
			\EndIf
			\State Select $x_{t+1}$ via~\eqref{Eq:NextIt}
			\EndFor
	\end{algorithmic}
\end{algorithm}

Compared to the SE kernel, the Matérn information gain grows faster, which impacts the parameters of the algorithm. The noise proxy must satisfy $
    \tilde{\sigma}_t^2 = \mathcal{O}\left(t^{\tilde{\alpha}}\right)$, $\tilde{\alpha} < \frac{2 \nu - d(d+1)}{4 \nu + 2d(d+1)},
    $, and each window requires $\mathcal{O}\left(T^{\frac{2d}{2\nu-d}}\right)$ 
    additional queries. These scalings follow from adapting the spectral analysis of~\cite{mauduit2025time} to Matérn kernels.

\begin{theorem}[Regret of W-SparQ-BL]
\label{Thm:WSPARQBLRegret}
Under Assumption~\ref{ass:MaternRKHSBall}, let $\tilde{\alpha}$ satisfy
$\tilde{\alpha} < \frac{2 \nu - d(d+1)}{4 \nu + 2d(d+1)}$.
Assume $f$ is $L_f$-Lipschitz. Then, with probability at least $1-\delta$, by performing~$Q_{t_k}$ additional observations at each window start~$t_k$, W-SparQ-BL achieves
\[
R_T = \mathcal{O}\left(
m \sqrt{T^{\tilde{\alpha}+1} \gamma_T \left(\gamma_{Q_T} + \log \frac{1}{\delta}\right)}
\right).
\]
\end{theorem}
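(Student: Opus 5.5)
The plan is to reduce the bilevel regret to a sum of posterior widths, coordinate by coordinate, and then reuse the W-SparQ-GP-UCB machinery from the sequential-game analysis (Theorem~\ref{Thm:BiLevelRegret}) for each coordinate.

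\textbf{Step 1: RKHS control.} By Proposition~\ref{prop:uniform_rkhs}, every $\tilde g_t^i$ lies in a common ball of radius $B$ of $\Hk_{k,\mathcal{X}}$. Within a window started at $t_k$, the observations $y_s^i$ with $s<t$ are treated as noisy observations of $\tilde g_t^i$. Their effective noise is the observation noise plus the drift $\tilde g_s^i-\tilde g_t^i$. Following the W-SparQ construction, the windowing keeps this effective variance proxy at $\tilde\sigma_t^2=\mathcal{O}(t^{\tilde\alpha})$: the pseudo-observations $\Ys_{t_k}$ are fresh evaluations at the DPP-selected points, and the window length is tuned to $\tilde\alpha$. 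Proposition~\ref{Prop:BiLevelTemporalVar} bounds the drift uniformly in sup-norm, which is what lets the drift be absorbed into the noise term.

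\textbf{Step 2: Confidence bounds.} I would then invoke the confidence bound with heteroscedastic noise used in W-SparQ-GP-UCB, applied to each coordinate $i$ with failure probability $\delta/m$. A union bound gives, with probability $1-\delta$, $\tilde g_t(x)\in\tilde{\mathcal U}_t(x)$ for all $t$ and $x$. The radius is $\beta_t$ as in~\eqref{Eq:MOBeta}. Since the data in each window consist of $Q_{t_k}$ pseudo-points plus the recent points, the log-determinant term is bounded by $\mathcal{O}(\gamma_{Q_T}+\log(1/\delta))$.

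\textbf{Step 3: Per-round regret.} On this event, optimism as in the GP-UCBL proof (Theorem~\ref{thm:ucbl}) gives
\[
f(x_t^*,\tilde g_t(x_t^*))\le \max_{y\in\tilde{\mathcal U}_t(x_t)} f(x_t,y).
\]
The Lipschitz property in $\|\cdot\|_1$ then yields
\[
r_t\le 2L_f\beta_t\sum_{i=1}^m\sigma_{t-1}^i(x_t).
\]
Summing over $t$ and applying Cauchy--Schwarz gives
\[
R_T\le 2L_f\beta_T\sqrt{T\sum_t\bigl(\textstyle\sum_i\sigma^i_{t-1}(x_t)\bigr)^2}\le 2L_f m\,\beta_T\sqrt{T\max_i\sum_t(\sigma^i_{t-1}(x_t))^2}.
\]

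\textbf{Step 4: Width sum.} The remaining task is to bound $\sum_t(\sigma_{t-1}^i(x_t))^2$ under the time-increasing noise proxy. Using $\sigma^2\le \tilde\sigma_t^2\log(1+\tilde\sigma_t^{-2}\sigma^2)/\log(1+\tilde\sigma_t^{-2})$ together with $\tilde\sigma_t^2\le CT^{\tilde\alpha}$ gives $\sum_t\sigma^2\lesssim T^{\tilde\alpha}\gamma_T$. Combining with Step 3 gives $R_T=\mathcal{O}\bigl(m\sqrt{T^{\tilde\alpha+1}\gamma_T(\gamma_{Q_T}+\log(1/\delta))}\bigr)$.

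\textbf{Main obstacle.} The hardest step is Step 2, specifically showing that the DPP-selected sparse pseudo-observations inherit enough information that the confidence width stays valid after each window reset. This requires adapting the spectral (eigen-decay) argument of~\cite{mauduit2025time} from SE to Matérn kernels. There the polynomial eigenvalue decay $\lambda_j\sim j^{-(2\nu+d)/d}$ determines both the query count $Q_{t_k}=\mathcal{O}(T^{2d/(2\nu-d)})$ and the constraint on $\tilde\alpha$. I would first redo that computation with Matérn decay. I would then check that $T^{\tilde\alpha}\gamma_T\gamma_{Q_T}=o(T)$ exactly when $\tilde\alpha<\frac{2\nu-d(d+1)}{4\nu+2d(d+1)}$, using $\gamma_T=\tilde{\mathcal O}(T^{d(d+1)/(2\nu+d(d+1))})$.
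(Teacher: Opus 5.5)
Your decomposition is the paper's own. The pieces are: coordinate-wise confidence sets with a union bound over the $m$ outputs, optimism through the selection rule, and the $\|\cdot\|_1$-Lipschitz step giving $r_t\le 2L_f\beta_t\sum_i\sigma^i_{t-1}(x_t)$. Then come $\beta_T=\mathcal{O}(\sqrt{\gamma_{Q_T}+\log(1/\delta)})$ and a width-sum bound of order $T^{\tilde\alpha}\gamma_T$ imported from the W-SparQ analysis adapted to Mat\'ern kernels. The paper's proof is no more detailed than your Steps~2 and~4.

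The step that fails as you argue it is Step~1. Proposition~\ref{Prop:BiLevelTemporalVar} cannot be what lets the drift $\tilde{g}^i_s(x_s)-\tilde{g}^i_t(x_s)$ be absorbed into the noise. That drift is determined by the past, so its conditional mean is itself. It therefore does not fit the conditionally zero-mean sub-Gaussian noise required by the self-normalized bound (Lemma~7 of \cite{kirschner2018informationdirectedsamplingbandits}) behind $\beta_t$. Moreover, $2nB\sqrt{M_k}$ is weaker than the bound $2B\sqrt{M_k}$ that already follows from the common RKHS ball, so it carries no information about temporal variation.

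Indeed, Assumption~\ref{ass:MaternRKHSBall} alone allows $g_t(x,y)=\frac{\mu}{2}\|y-c_t\|_2^2$ with a constant $c_t=\pm c$ flipping unpredictably from round to round. For a suitable Lipschitz $f$, no sublinear dynamic regret is then possible. The paper does not derive the growing-noise model. It postulates it in Section~\ref{sec:tvbi} (the time-increasing sub-Gaussian noise model for $\varepsilon^i_t$), which is the analogue of Assumption~\ref{as.wsparq2} for the $\tilde{g}^i_t$. The windowing then caps the proxy at $\mathcal{O}(t^{\tilde\alpha})$. You need to invoke that assumption explicitly rather than Proposition~\ref{Prop:BiLevelTemporalVar}.

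There are also smaller points.

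\textbf{Step 4 inequality.} As written, the elementary inequality has a spurious leading factor $\tilde\sigma_t^2$, which would give $T^{2\tilde\alpha}\gamma_T$. The correct form is $\sigma^2_{t-1}(x_t)\le\log(1+\tilde\sigma_t^{-2}\sigma^2_{t-1}(x_t))/\log(1+\tilde\sigma_t^{-2})$, combined with $1/\log(1+\tilde\sigma_t^{-2})\le 1+\tilde\sigma_t^{2}$.

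\textbf{Splitting $\delta$.} The W-SparQ width-sum bound is itself a high-probability statement, because the DPP selection is random. So $\delta$ should be split as in the paper: $\delta/(2m)$ per coordinate and $\delta/2$ for the width sum.

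\textbf{Final threshold check.} The check you plan at the end would not come out ``exactly''. Write $a=\frac{d(d+1)}{2\nu+d(d+1)}$ and use the paper's $\gamma_T$ and $Q_T$. Then $T^{\tilde\alpha}\gamma_T\gamma_{Q_T}=o(T)$ reads $\tilde\alpha<1-a\frac{2\nu+d}{2\nu-d}$, whereas the stated threshold equals $\frac12-a$. This check is not needed anyway: the theorem asserts only the $\mathcal{O}(\cdot)$ bound, with the threshold on $\tilde\alpha$ taken as a hypothesis.
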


\paragraph{Remark.}
The choice of $\tilde{\alpha}$ ensures $R_T = o(T)$. However, the total number of additional queries scales as $N_T = T^{1-\tilde{\alpha}} \cdot T^{\frac{2d}{2\nu-d}}$, which can be superlinear depending on $(\nu,d)$, highlighting a trade-off between adaptivity and sample complexity. The number of additional queries~$Q_{t_k}$ can be explicitly computed: for SE kernel $Q_{t_k} = \mathcal{O}(\log^d(t_k))$ and Matérn-$\nu$ kernels $Q_{t_k} = \mathcal{O}\left(t_k^{\frac{2d}{2\nu-d}}\right)$.

\section{Numerical experiments} \label{Sec:numerics}

\paragraph{Time-varying sequential game.}

We evaluate the sequential game variant of W-SparQ-GP-UCB described in Algorithm~\ref{Appalg:SeqW-SparQ-GP-UCB} on the same experimental setting as in Section~4.1 of~\cite{sessa2020learning}, which consists of a traffic routing task. In this problem, a learner (e.g., a routing agent) proposes routing plans to a population of users (e.g., vehicles), with the objective of guiding each unit to its destination while keeping congestion low over the network. Details for the dataset and experimental setting are given in Appendix~\ref{App:ExpSeq}. To enhance temporal variations induced by the changes in opponents' types, we introduce an additional drift in the congestion over the edges. At each step, we impose a random congestion drift, which can be interpreted as a simplified model of external factors such as road works, accidents, or rush-hour effects.

\begin{figure}[H]
	\centering
	\includegraphics[scale=0.4]{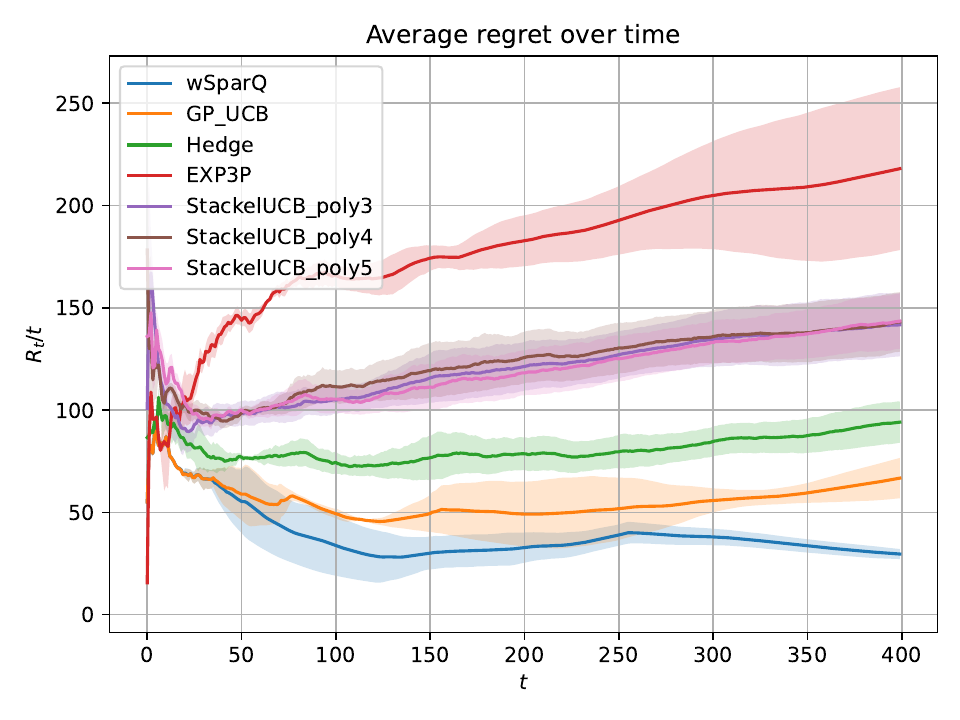}
	\caption{Average dynamic regret under  congestion drift.}
	\label{Fig:BiLevelDrift}
\end{figure}

Figure~\ref{Fig:BiLevelDrift} reports the results in this setting, where we compare our method, denoted wSparQ, to StackelUCB with different polynomial kernel degrees, as well as multiplicative weights algorithms, namely HEDGE~\citep{freund1997decision} and EXP3~\citep{auer2002nonstochastic}. We observe that wSparQ outperforms the other methods and exhibits asymptotically decreasing regret. Notably, HEDGE, despite having access to full information, struggles to adapt to temporal changes. This behavior can be attributed to its long-term memory: standard HEDGE aggregates past observations uniformly, which hinders adaptation to non-stationary environments. Incorporating forgetting mechanisms or windowing strategies could potentially improve its performance in this setting.

It is important to note that the Sioux-Falls dataset poses several challenges. First, the opponent’s response does not strictly satisfy Assumption~\ref{As:opponentVar}. More importantly, the input dimension is very high ($d=76$), as StackelUCB was originally designed for discrete action spaces. Since theoretical guarantees scale poorly with dimension, both in terms of regret bounds and required additional queries, this setting lies well outside the ideal regime of our analysis.
Despite these obstacles, wSparQ adapts well to high-dimensional data and achieves strong empirical performance using far fewer additional queries than predicted by theory. This suggests that the method remains practically applicable beyond the assumptions under which its theoretical guarantees are derived.

\paragraph{Time-varying bilevel optimization.} 
We evaluate \textbf{W-SparQ-BL} against \textbf{GP-UCBL} on three synthetic bilevel settings over the domain $\mathcal{X}=[0,1]$, designed to reflect increasing levels of temporal variability in the lower-level response. We use a Matérn kernel with smoothness parameter $\nu=1.5$, corresponding to twice differentiable response functions, in line with the Sobolev-type assumptions used in our theoretical analysis. See Appendix~\ref{App:ExpBL} for details on the functions. For both methods, we plot the mean and standard deviation (across $10$ realizations) of the cumulative regret over $T=300$ iterations.

\begin{figure}[H]
\centering
\begin{minipage}{0.32\textwidth}
    \centering
    \includegraphics[width=\textwidth]{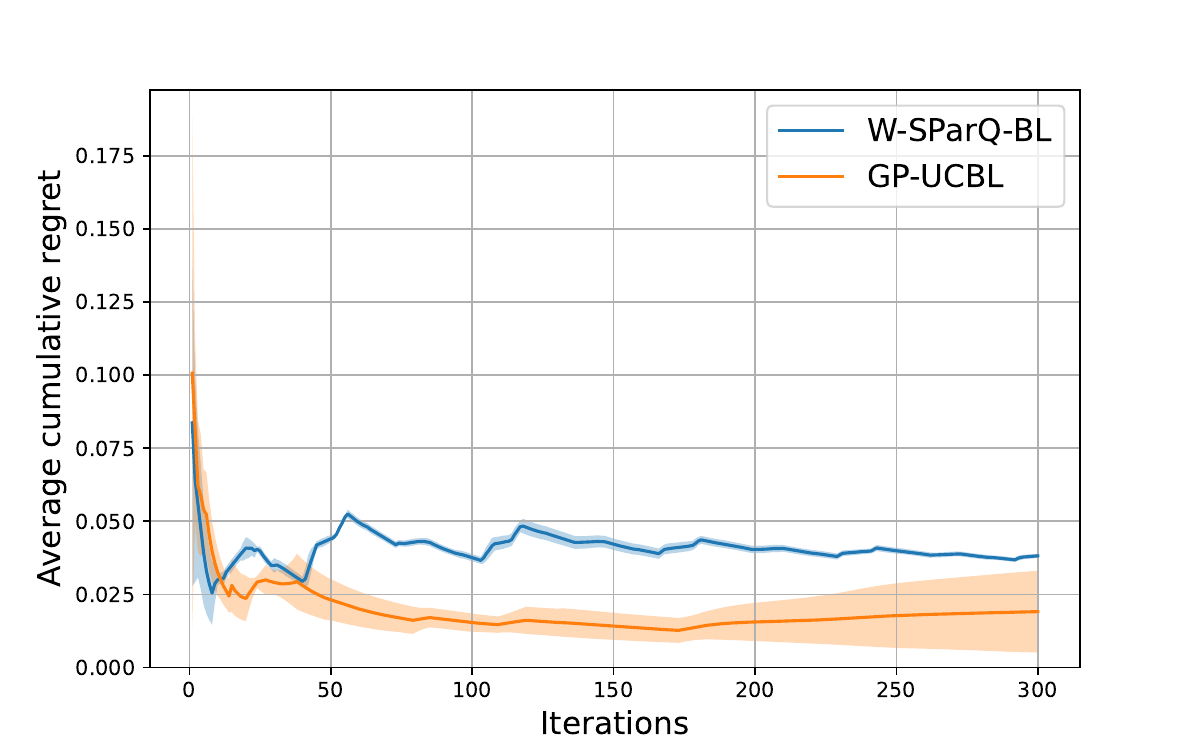}
    \caption*{Stationary}
\end{minipage}
\hfill
\begin{minipage}{0.32\textwidth}
    \centering
    \includegraphics[width=\textwidth]{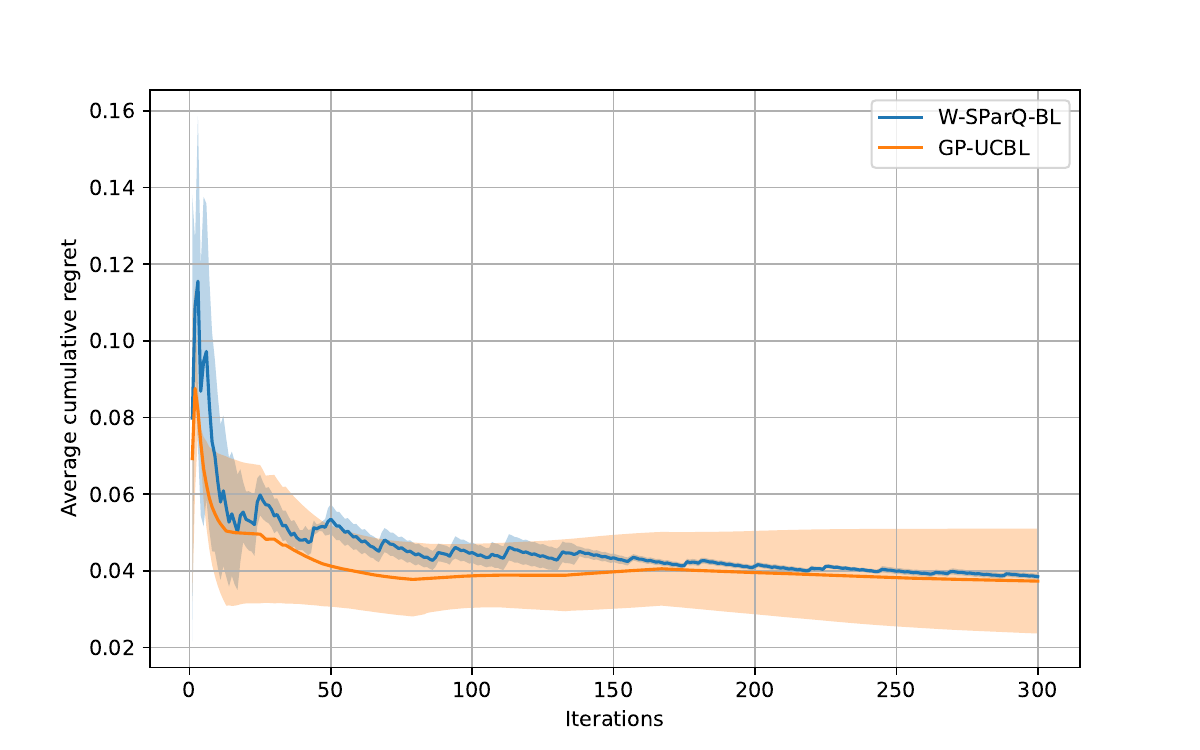}
    \caption*{Moderate variation}
\end{minipage}
\hfill
\begin{minipage}{0.32\textwidth}
    \centering
    \includegraphics[width=\textwidth]{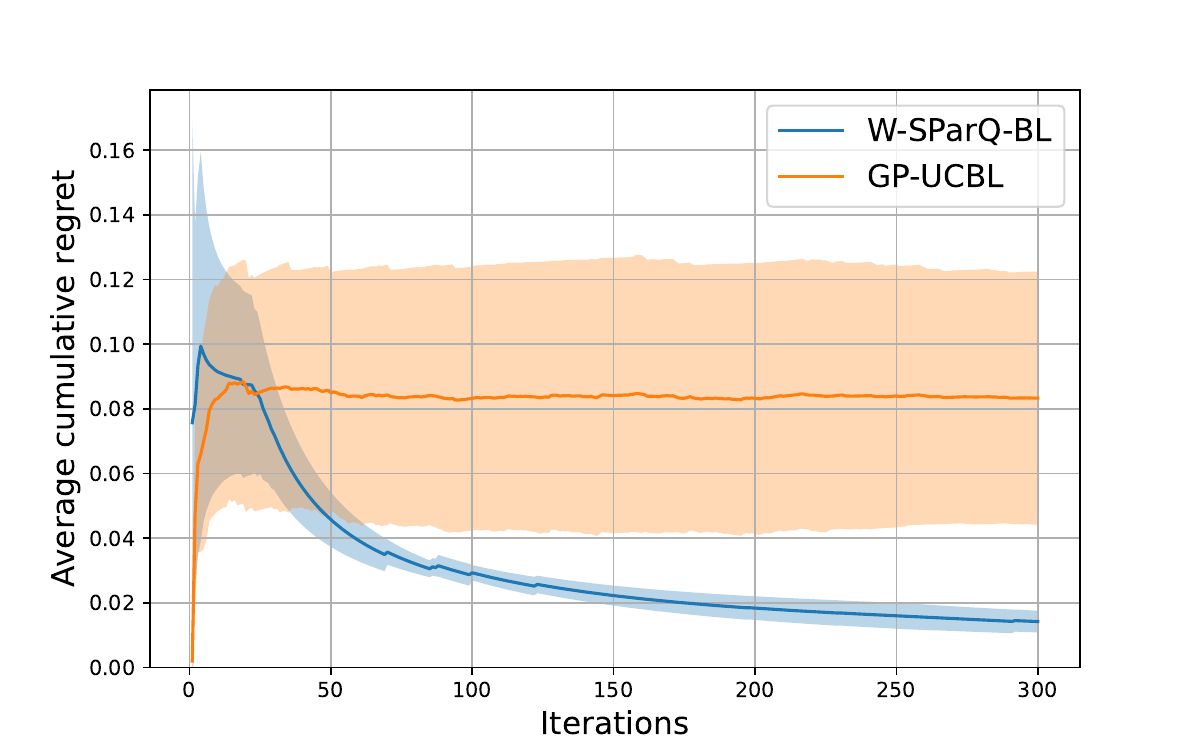}
    \caption*{Fast variation}
\end{minipage}
\caption{Average cumulative regret of \textbf{W-SparQ-BL} and \textbf{GP-UCBL} under increasing levels of temporal variability.}
\label{fig:experiments}
\end{figure}

In the stationary and moderate variation regimes, both methods achieve comparable performance, as expected since temporal adaptation is not required. We can even note that GP-UCBL slightly outperforms W-SparQ-BL, likely due to the noise model of the latter that favors unnecessary exploration in these settings. In the fast variation setting, W-SparQ-BL consistently outperforms GP-UCBL that suffers from outdated information. We might also note that the inter-runs variance of W-SparQ-BL is lower than the one of GP-UCBL, suggesting that the method is more robust to changes in the dataset.

\section{Conclusion}

We introduced \textbf{W-SparQ-BL}, a novel approach for Bayesian optimization of bilevel problems that combines windowing and sparse querying to handle time-varying settings.  In a first configuration, where the lower-level can be directly modeled as a response function in an RKHS, our method recovers guarantees comparable to standard single-level Bayesian optimization, both in terms of dynamic regret and number of additional queries. We then considered the more general and challenging bilevel setting, where the lower-level response is defined implicitly through an $\argmin$ mapping. In this case, ensuring that the induced mapping lies in an RKHS requires additional regularity assumptions on the lower-level objective. This step is critical to justify the use of Gaussian Process regression as a surrogate model.

Our analysis reveals a fundamental trade-off in GP-based bilevel optimization. On the one hand, strong regularity assumptions enable fast convergence rates and require only a limited number of additional queries. On the other hand, under weaker and more realistic assumptions, both the regret rates and, more importantly, the number of additional queries increase significantly.  These results highlight intrinsic limitations of kernel-based approaches in hierarchical optimization problems, and suggest that the statistical complexity of learning the lower-level structure cannot be decoupled from the optimization task. Future work directions include extending the analysis to settings where both the upper and lower level functions are unknown via joint GP regression.

\section*{Acknowledgement}
The work is partly supported by Hi! Paris and ANR/France 2030 program (ANR-23-IACL-0005) as well as the ANR project ANR-23-CE48-0011-01.

\bibliographystyle{plainnat}
\bibliography{references}


\newpage

\appendix


In this supplementary material, we provide additional technical elements, namely a reminder of the W-SparQ methodology for time-varying optimization (Appendix~\ref{App:SeqGamesSparQ}), additional results for the squared exponential kernel (Appendix~\ref{App:SEKernel}), details on the experimental setups (Appendix~\ref{App:Exp}), and proofs of all the theoretical results (Appendix~\ref{App:proofs}). 

\section{W-SparQ methodology}\label{App:SeqGamesSparQ}

In this section, we provide a self-contained presentation of the \textbf{W-SparQ-GP-UCB} methodology and explain how it can be adapted to the sequential games setting considered in this work.

\subsection{W-SparQ-GP-UCB for time-varying Bayesian optimization}

We begin by recalling the core ideas underlying \textbf{W-SparQ-GP-UCB}. A more detailed presentation can be found in~\cite{mauduit2025no}.
We consider a sequence of time-varying reward functions $(f_t)_{t \geq 1}$ defined over a domain $\mathcal{X}$, and aim to solve
\[
\underset{x \in \mathcal{X}}{\argmax} \, f_t(x),
\]
based on noisy observations
\[
\forall t, \quad y_t = f_t(x_t) + \epsilon_t, \qquad \epsilon_t \sim \mathcal{N}(0, \sigma^2).
\]

The following assumptions are made on the function's spatial and temporal regularity.

\begin{assumption}[Spatial regularity]\label{as.wsparq1}
For all $t \in \mathbb{N}$, the function $f_t$ belongs to the RKHS $\mathcal{H}_k$ associated with a bounded kernel $k$, and satisfies $\|f_t\|_{\mathcal{H}_k} \leq B$ almost surely.
\end{assumption}

\begin{assumption}[Temporal regularity]\label{as.wsparq2}
There exists $\alpha \geq 0$ such that, for all $t_1 \leq t_2$ and all $x \in \mathcal{X}$,
\[
f_{t_1}(x) - f_{t_2}(x) \sim \mathrm{subG}\big(\sigma^2 (t_2 - t_1)^\alpha\big).
\]
\end{assumption}
In particular, Assumption~\ref{as.wsparq2} captures the fact that observations become progressively less informative about future functions as the time gap increases.

A key idea of \textbf{W-SparQ-GP-UCB} is to reinterpret past observations as noisy measurements of the current function by explicitly modeling this temporal drift as additional noise. This leads to the equivalent observation model
\begin{equation}\label{Eq:UIModel}
\forall t_1 \leq t_2, \quad y_{t_1} = f_{t_2}(x_{t_1}) + \epsilon_{t_1, t_2}, 
\qquad \epsilon_{t_1, t_2} \sim \mathrm{subG}\big(\sigma^2(1 + (t_2 - t_1)^\alpha)\big).
\end{equation}
This model enables the use of heteroscedastic GP regression to estimate $f_{T+1}$ from past observations $(X_T, Y_T)$, using a noise covariance matrix
\[
\Sigma_T = \mathrm{diag}\big(\sigma^2(1 + n^\alpha)\big)_{n=0}^{T-1}.
\]

Since the noise variance grows as $\mathcal{O}(T^\alpha)$, older observations quickly become unreliable. As a consequence, directly applying standard GP-UCB leads to poor performance and, in general, fails to achieve sublinear regret.

A first approach consists in refreshing the dataset at every iteration by selecting a representative subset of past inputs via a determinantal point process (DPP)~\citep{Kulesza_2012}, and querying their current values. This strategy allows one to approximate the posterior obtained from fully fresh observations with bounded noise, and recover standard regret guarantees. However, performing such updates at every iteration is unnecessary.

Indeed, a key observation is that there exists a threshold $\tilde{\alpha}$ (depending on the kernel through its information gain) such that if the effective noise grows as $\mathcal{O}(t^{\tilde{\alpha}})$, then GP-UCB achieves no-regret.
This motivates partitioning time into non-overlapping windows $\mathcal{W}_j = \{t_j, \ldots, t_{j+1}-1\}$ satisfying
\begin{equation}\label{eq:window-size}
t_j^{\tilde\alpha/\alpha} < t_{j+1} - t_j \leq t_j^{\tilde\alpha/\alpha} + 1.
\end{equation}
With this construction, for any $t \in \mathcal{W}_j$, observations collected within the same window have an effective noise variance bounded by $\mathcal{O}(t^{\tilde{\alpha}})$. Therefore, it is sufficient to perform the costly DPP-based refresh only at the \emph{beginning} of each window.

The resulting method, \textbf{W-SparQ-GP-UCB}, is described in Algorithm~\ref{alg:W-SparQ-GP-UCB}. It alternates between:
\begin{itemize}
\item sparse updates at the beginning of each window,
\item standard GP-UCB updates within each window.
\end{itemize}

\begin{theorem}[\citep{mauduit2025no} Regret bound for \textbf{W-SparQ-GP-UCB}]\label{Thm:regretW-SparQ}
		Let~$(f_t)_t$ be a sequence of reward functions and~$y_t=f_t(x_t)+\epsilon_t$ with~$\epsilon_t\sim\mathcal{N}(0,\sigma^2)$. Under Assumptions~\ref{as.wsparq1}--\ref{as.wsparq2} with parameter~$\alpha$ and a squared exponential kernel~$k$, let~$(x_t)_{t=1}^T$ be the decisions of Algorithm~\ref{alg:W-SparQ-GP-UCB}, for~$0<\tilde\alpha<1/3$ chosen by the user. For any~$0<\delta\le1$, define
         \[\beta_t = \sqrt{2 \log \left(  \frac{|\Sigma_t + K_t|^{1/2}}{\delta|K_t|^{1/2}} \right)} + B.
        \]
         Then, with probability at least $1-\delta$, by querying the expert~$Q_{t_k}=\mathcal{O}(\log^d t_k)$ times at each window starting at~$t_k$, the cumulative dynamic regret satisfies
		\begin{equation}\label{Eq:WSparQUpper}
			R_T = \mathcal{O}\left(\sqrt{T^{\tilde{\alpha}+1} d \log^{d+1 }T\left(\log \frac{1}{\delta} + d^2\log^{d+1}\left(\log T\right)\right)}\right).
		\end{equation}
		Moreover, the average number of expert queries per step vanishes:
		\begin{align*}
		\frac{N_T}{T} = \mathcal{O}\big(T^{-\tilde\alpha/\alpha}\log^d T\big) = o(1).
		\end{align*}
	\end{theorem}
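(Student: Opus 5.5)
The bound follows the usual GP-UCB template, applied window by window. At each time $t$ we interpret the current dataset as heteroscedastic noisy observations of the current function. It consists of the DPP-selected queries at the window start, which are fresh with variance $\sigma^2$, and the in-window observations. By the observation model \eqref{Eq:UIModel}, every in-window observation carries effective noise $\mathrm{subG}(\sigma^2(1+(t-t_j)^\alpha))$. The window-size condition \eqref{eq:window-size} gives $t-t_j\le t_j^{\tilde\alpha/\alpha}+1$, so all these variances are at most $\tilde\sigma_t^2=\mathcal{O}(t^{\tilde\alpha})$.

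\textbf{Step 1 (confidence bounds).} We apply the self-normalized concentration bound for RKHS functions under sub-Gaussian heteroscedastic noise, which is the Abbasi-Yadkori / Chowdhury--Gopalan bound with noise matrix $\Sigma_t$. This gives, with probability $1-\delta$, $|f_t(x)-\mu_{t-1}(x)|\le \beta_t\sigma_{t-1}(x)$ for all $x,t$, with $\beta_t$ as stated. To handle the dataset restarts, we use a union bound over windows, or simply note that the martingale argument applies to each dataset separately.

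\textbf{Step 2 (instantaneous regret).} By optimism, $r_t\le 2\beta_t\sigma_{t-1}(x_t)$. The posterior variance under noise at most $\tilde\sigma_t^2$ is bounded by the posterior variance computed with the homoscedastic noise $\tilde\sigma_t^2$. Since $\sigma_{t-1}^2\le k\le 1$, we have $\sigma^2_{t-1}(x_t)\le C\,\tilde\sigma_t^2\log(1+\tilde\sigma_t^{-2}\sigma^2_{t-1}(x_t))$. Summing inside each window and applying Cauchy--Schwarz gives
\[\sum_t r_t\le 2\beta_T\sqrt{T\sum_t\sigma_{t-1}^2(x_t)}\lesssim \beta_T\sqrt{T\cdot T^{\tilde\alpha}\gamma_T}.\]
Here the sum of log-terms is bounded by the information gain of the observed points, at most $\gamma_T$, since summing over windows only adds restarted sub-blocks whose total is still controlled by $\gamma_T$ up to a factor we must check.

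\textbf{Step 3 (bounding $\beta_T$).} The log-determinant ratio in $\beta_t$ involves only the current window's data, namely the $Q_{t_k}=\mathcal{O}(\log^d t_k)$ DPP points plus $\mathcal{O}(t^{\tilde\alpha/\alpha})$ in-window points. The role of the DPP sparsification, as in \cite{mauduit2025no}, is to guarantee that the posterior from these queries approximates the full fresh-data posterior, so that the older information is not lost. For the SE kernel the information gain is $\gamma_n=\mathcal{O}(\log^{d+1}n)$. This gives $\beta_T^2=\mathcal{O}(\log(1/\delta)+d^2\log^{d+1}(\log T))$, which explains the $\log\log T$ term, together with $\gamma_T=\mathcal{O}(d\log^{d+1}T)$. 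Substituting these yields \eqref{Eq:WSparQUpper}. The condition $\tilde\alpha<1/3$ ensures that the bound is sublinear and that the approximation errors of the sparse posterior, which depend on eigenvalue decay, remain dominated by these terms.

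\textbf{Step 4 (query count).} There are $\mathcal{O}(T/T^{\tilde\alpha/\alpha})$ windows, each using $\mathcal{O}(\log^d T)$ queries, so $N_T/T=\mathcal{O}(T^{-\tilde\alpha/\alpha}\log^d T)$.

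The main obstacle is Step 3 together with the link between the DPP step and the posterior variance. One must show that conditioning on $\mathcal{O}(\log^d t_k)$ fresh queries at window starts preserves enough information that the summed variances telescope into $\gamma_T$ rather than restarting at each window. Restarting would cost a factor equal to the number of windows. To prove this, I would first try a Nyström-type spectral bound: the DPP sample $\epsilon$-approximates the kernel matrix on all past inputs, so the posterior variance after the refresh is within a constant factor of the full-data posterior variance.
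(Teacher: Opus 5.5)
The paper does not prove this statement. It is imported from \cite{mauduit2025no}, and the paper's own regret proofs (e.g.\ that of Theorem~\ref{Thm:BiLevelRegret}) hand exactly the sum $\sum_t\sigma_{t-1}(x_t)$ over to that reference. Your proposal therefore has to stand on its own, and it has a genuine gap at the step you flag. The gap is worse than missing: the inequality you need there, $\sum_{t\le T}\sigma^2_{t-1}(x_t)\lesssim T^{\tilde\alpha}\gamma_T$ for the windowed posterior, is false whenever $\tilde\alpha<\alpha$, which is the regime where windows are shorter than the elapsed time.

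At $t\in\mathcal{W}_j$, the posterior of Algorithm~\ref{alg:W-SparQ-GP-UCB} conditions only on two sets of points: the $Q_{t_j}$ refreshed points, with noise variance at least $\sigma^2$, and at most $L_j\le t_j^{\tilde\alpha/\alpha}+1$ in-window points with variances $\sigma^2(1+n^\alpha)$. Write the posterior covariance in feature space as $(I+\Phi^\top\Sigma^{-1}\Phi)^{-1}$, and use $\Phi^\top\Sigma^{-1}\Phi\preceq(\sum_i k(x_i,x_i)/\sigma_i^2)I$ with $k(x,x)=1$ for the SE kernel. This gives, for every $x$ and regardless of the past actions,
\[
\sigma^2_{t-1}(x)\;\ge\;\Bigl(1+\sigma^{-2}\Bigl(Q_{t_j}+\sum_{n=0}^{L_j}\frac{1}{1+n^\alpha}\Bigr)\Bigr)^{-1}.
\]
For $\alpha<1$ the bracket is $\mathcal{O}(\log^dT+T^{\tilde\alpha/\alpha-\tilde\alpha})$; for $\alpha\ge1$ it is $\mathcal{O}(\log^dT)$. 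Hence $\sum_{t\le T}\sigma^2_{t-1}(x_t)$ is at least of order $T^{1-\tilde\alpha/\alpha+\tilde\alpha}$ up to logarithms (resp.\ $T/\log^dT$). That is a polynomial factor above $T^{\tilde\alpha}\gamma_T$. Even $\sum_t\sigma_{t-1}(x_t)$ exceeds $\sqrt{T^{1+\tilde\alpha}\gamma_T}$ by a polynomial factor (roughly $T^{\frac12-\frac{\tilde\alpha}{2\alpha}}$ for $\alpha<1$). So neither a DPP lemma nor a sharper inequality than Cauchy--Schwarz can make your Step~2 deliver \eqref{Eq:WSparQUpper} for this posterior.

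Your Nystr\"om route fails for a structural reason. A Nystr\"om/DPP guarantee controls the residual $K_{XX}-K_{XS}K_{SS}^{-1}K_{SX}$, that is, how well the selected points span the past inputs. It says nothing about noise averaging, which is what makes the full-data posterior variance at a repeatedly played point keep decreasing with the number of plays. Since $Q_{t_j}$ noisy observations cap the precision at $\mathcal{O}(Q_{t_j}/\sigma^2)$, ``within a constant factor of the full-data posterior variance'' fails as soon as past actions cluster, as UCB actions do near the optimum. Reaching the stated rate would need an argument that does not bound $r_t$ by $2\beta_t\sigma_{t-1}(x_t)$ for this windowed posterior. Your proposal does not supply one, and neither does the present paper, which only cites \cite{mauduit2025no}.

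Two smaller points:
\begin{itemize}
\item Step~3 does not produce the $\log^{d+1}(\log T)$ term from the ingredients you list. The current dataset contains $\Theta(t^{\tilde\alpha/\alpha})$ in-window points, and $\gamma_n=\mathcal{O}(\log^{d+1}n)$ with $n$ polynomial in $T$ gives $\log^{d+1}T$.
\item In Step~1 the target $f_t$, and hence the noise sequence $y_s-f_t(x_s)$, changes with $t$. A single self-normalized bound therefore does not hold simultaneously over $t$. You need a union bound over the $T$ targets, which adds $\log T$ to $\beta_t^2$. Because $\beta_t$ is not monotone across restarts, you also need $\max_{t\le T}\beta_t$ in place of $\beta_T$.
\end{itemize}
Step~4 is fine.
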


\begin{algorithm}
		\caption{W-SparQ-GP-UCB}\label{alg:W-SparQ-GP-UCB}
		\begin{algorithmic}[1]
			\Require Domain \(\mathcal{X}\), kernel \(k\), time horizon \(T\), parameters \(\tilde\alpha,\alpha\)
			\Ensure Sequence of selected actions $\{x_t\}_{t=1}^T$
			\For{$t = 1,\dots,T$}
			\If{Beginning of a window}
			\State Perform sparse inference on~$X_t$ to obtain~$\Xs_t$
			\State Get updated outputs~$\Ys_t$
			\State Form posterior by Bayesian update on~$(\Xs_t,\Ys_t)$
			\State Initialize~$(\Xr_t,\Yr_t) \leftarrow (\Xs_t,\Ys_t)$
			\Else
			\State Append~$(x_t,y_t)$ to~$(\Xr_t,\Yr_t)$
			\State Form posterior by Bayesian update on~$(\Xr_t,\Yr_t)$
			\EndIf
			\State Select \(x_{t+1}=\arg\max_{x\in\mathcal{X}}\; \mu_t(x)+\beta_{t+1}\sigma_t(x)\)
			\EndFor
		\end{algorithmic}
	\end{algorithm}

\subsection{Application to sequential games}

We now show how this methodology extends to the sequential games setting.
The main insight is that the sequence of response functions $(g_t)_t$ induced by the opponent can be treated as a time-varying function satisfying a noise growth model similar to~\eqref{Eq:UIModel}. As a result, the sequential game reduces to a time-varying GP regression problem.

We partition time into windows as before and construct GP confidence intervals for the response function. At each iteration, the learner selects actions according to the optimistic bilevel rule
\begin{equation}\label{AppEq:LeaderNextIt}
x_t = \arg\max_{x\in\mathcal{X}}\;\max_{y\in[\mathrm{lcb}_{t}(x),\mathrm{ucb}_{t}(x)]} r(x,y).
\end{equation}

At the beginning of each window, $Q_t$ additional queries are performed on a sparse subset $\Xs_{t_j}$ selected via a DPP:
\[
Q_{t_j} =
\begin{cases}
\mathcal{O}(\log^d t_j)\,, & \text{for the SE kernel}, \\
\mathcal{O}\left(T^{\frac{2d}{2\nu-d}}\right), & \text{for the $\nu$-Matérn kernel}.
\end{cases}
\]

These additional observations significantly reduce uncertainty on the current response while preserving controlled query complexity.
The method builds high-confidence bounds on the unknown response function, which are then propagated through the reward via an optimistic construction. This enables the learner to balance exploration (learning the opponent’s behavior) and exploitation (maximizing reward). Algorithm~\ref{Appalg:SeqW-SparQ-GP-UCB} summarizes the main steps of the method.

\begin{algorithm}[H]
	\caption{W-SparQ-GP-UCB for sequential games}\label{Appalg:SeqW-SparQ-GP-UCB}
	\begin{algorithmic}[1]
		\Require Action set $\mathcal{X}$, kernel $k$, parameters $\{\beta_t\}_{t\geq1},\tilde\alpha,\alpha$
        \Ensure Sequence of selected actions $\{x_t\}_{t\in\mathbb{N}}$
        \For{$t = 1,\dots$}
			\If{Beginning of a window}
			\State Perform sparse inference on~$X_t$ to obtain~$\Xs_t$
            \State Update~$\Thetas_t = \{\theta_t\}^{Q_t}$
			\State Get updated outputs~$\Ys_t$
			\State Form posterior by Bayesian update on~$(\Xs_t, \Thetas_t, \Ys_t)$
			\State Compute $\tilde{r}_t(\cdot,\theta_t)$
			\State Initialize~$(\Xr_t,\Thetar_t,\Yr_t) \leftarrow (\Xs_t, \Thetas_t, \Ys_t)$
			\Else
			\State Append~$(x_t, \theta_t, y_t)$ to~$(\Xr_t,\Thetar_t, \Yr_t)$
			\State Form posterior by Bayesian update on~$(\Xr_t, \Thetar_t, \Yr_t)$
			\EndIf
			\State Select \(x_{t+1}=\arg\max_{x\in\Dset}\; \tilde{r}_t(x,\theta_t) \)
			\EndFor
	\end{algorithmic}
\end{algorithm}

This construction directly leads to the regret guarantees established in the main text.

\section{Squared Exponential kernel for bilevel Bayesian optimization}\label{App:SEKernel}

In this appendix, we discuss assumptions on the reponse~$g$ to obtain argmin mappings within the RKHS of squared exponential kernels, and compare them to those obtained for the class of Matérn kernels, both in the time-invariant and time-varying settings.

\subsection{Squared exponential kernel: assumptions in the time-invariant setting}\label{App:SETI}

The situation is fundamentally different for the squared exponential kernel. 
Its RKHS consists of restrictions of entire functions satisfying a Bargmann--Fock growth condition~\citep{steinwart2006explicit}. As a consequence, ensuring $\tilde{g} \in \Hk_{k,\mathcal{X}}^m$ requires global analyticity of $g$ in $x$, as well as the absence of singularities in its complex extension.

In particular, unlike the Matérn case, local Sobolev regularity is not sufficient. This makes the SE kernel significantly more restrictive for modeling argmin mappings, despite its appealing smoothness properties.
%
This dichotomy will be reflected in our regret bounds: while both kernels yield sublinear regret, the Matérn setting provides a more robust and verifiable modeling framework. We formalize this intuition in the following assumption.

\begin{assumption}\label{ass:rbf}
\begin{enumerate}
    \item[\textup{(B1)}] For every $y \in \mathcal{Y}$, the mapping $x \mapsto g(x,y)$ extends to an entire function on $\mathbb{C}^d$.
    \item[\textup{(B2)}] These extensions satisfy a uniform growth condition of the form
    \[
    |g(z,y)| \leq C \exp\big(c \|z\|^2\big), \quad \forall z \in \mathbb{C}^d, \; y \in \mathcal{Y},
    \]
    for some constants $C,c > 0$.
    \item[\textup{(B3)}] $g$ satisfies the strong convexity condition (A1).
\end{enumerate}
\end{assumption}
Under these assumptions, the analytic implicit function theorem ensures that $\tilde{g}$ extends holomorphically to $\mathbb{C}^d$ with controlled growth, implying $\tilde{g} \in \Hk_{k_l,\mathcal{X}}^m$.

\subsection{Squared exponential kernel: assumptions in the time-varying setting}\label{App:SETV}

As in the time-invariant case, extending the analysis to SE kernels requires global analyticity assumptions on $(g_t)$.  Ensuring $\tilde{g}_t \in \Hk_{k_l,\mathcal{X}}^m$ uniformly in $t$ requires:
\begin{itemize}
    \item uniform analyticity of $g_t$ in $x$,
    \item a uniform lower bound on the radius of analyticity,
    \item and compatibility with the Bargmann--Fock growth condition.
\end{itemize}

These conditions are significantly stronger than the Sobolev assumptions used for Matérn kernels, but in return yield faster information gain decay and improved regret rates. Hence, Matérn kernels provide a more flexible modeling framework for time-varying bilevel problems, while SE kernels offer faster rates at the cost of strong global structural assumptions.

\subsection{Comparison of Matérn and SE kernels}\label{App:SEComp}

We finally display in Table~\ref{tab:tableComparison} the main comparison elements for Matérn and SE kernels, illustrating the natural tradeoff emerging (regularity assumptions vs. algorithmic efficiency)
\begin{table}[H]
\centering
\caption{
Comparison of the assumptions and theoretical guarantees obtained for the squared exponential (SE) and Matérn-$\nu$ kernels. 
Here, $\gamma_T$ denotes the information gain, $Q_T$ the number of additional queries, and $\tilde{\alpha}$ the admissible growth exponent threshold.
}
\renewcommand{\arraystretch}{1.25}
\begin{tabular}{lcc}
\hline
 & SE kernel & Matérn-$\nu$ kernel \\
\hline
Assumptions on $g_t$
&
\begin{minipage}[t]{0.33\textwidth}
\vspace{2pt}
\begin{itemize}\setlength\itemsep{0.2em}
    \item uniform strong convexity in $y$
    \item analyticity in $x$
    \item uniform analyticity radius
    \item Bargmann--Fock growth condition
\end{itemize}
\vspace{2pt}
\end{minipage}
&
\begin{minipage}[t]{0.33\textwidth}
\vspace{2pt}
\begin{itemize}\setlength\itemsep{0.2em}
    \item uniform strong convexity in $y$
    \item Sobolev regularity of order $\nu+d/2+1$
    \item uniform Sobolev bounds
\end{itemize}
\vspace{2pt}
\end{minipage}
\\
\hline
$\gamma_T$
& $d\,\log^{d+1}(T)$
& $T^{\frac{d(d+1)}{2\nu+d(d+1)}}$
\\[0.4em]
\hline
$Q_T$
& $\log^d(T)$
& $T^{\frac{2d}{2\nu-d}}$
\\[0.4em]
\hline
Admissible $\tilde{\alpha}$
& $0 < \tilde{\alpha} < \frac{1}{3}$
& $\tilde{\alpha} < \frac{2\nu-d(d+1)}{4\nu+2d(d+1)}$
\\
\hline
\end{tabular}
\label{tab:tableComparison}
\end{table}

\section{Experimental settings}\label{App:Exp}

In this sections, we provide a more in-depth description of the datasets, as well as additional experiments on the traffic routing task.

\subsection{Time-varying sequential games}\label{App:ExpSeq}

We use the Sioux-Falls road traffic network~\citep{leblanc1975efficient}, composed of~$24$ nodes and~$76$ edges. At each iteration, the learner routes~$300$ units between a fixed origin and destination, represented in green and blue in Figure~\ref{Fig:SiouxFalls}.

\begin{figure}
	\centering
	\includegraphics[scale=1]{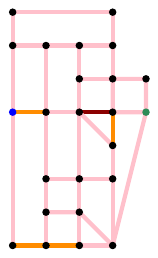}
	\caption{Schematic representation of the Sioux-Falls road graph.}
	\label{Fig:SiouxFalls}
\end{figure}

At each step~$t$, the learner proposes a routing plan~$x_t \in \mathcal{X} = \mathbb{N}^{|\mathrm{E}|}$, where~$\mathrm{E}$ denotes the set of edges. Each component~$x_t[i]$ corresponds to the number of vehicles routed through edge~$i \in \mathrm{E}$. The opponent’s type~$\theta_t \in \mathbb{N}^{552}$ represents the users’ preference profile at time~$t$: each coordinate encodes the number of users traveling between a given origin--destination pair in the network (there are~$24 \times 23 = 552$ such pairs).

After observing~$x_t$, users select their routes according to their preferences and the learner’s proposal, resulting in congestion over the edges of the network. Congestion levels are computed using models similar to those in~\cite{leblanc1975efficient} and illustrated as an example in Figure~\ref{Fig:SiouxFalls} using different shades of red. As in~\cite{sessa2020learning}, we define
\[
y_t = b(x_t, \theta_t) + \epsilon_t \in \mathbb{R}_+,
\]

where~$b(\cdot,\cdot)$ denotes the (unknown) average congestion function and~$\epsilon_t$ is observation noise. This is the quantity we aim to learn from noisy observations.

The learner’s reward is defined as
\[
r(x_t, y_t) = g(x_t) - \kappa\, y_t,
\]
where~$g(x_t)$ is the total number of units successfully routed to the destination node and~$\kappa>0$ is a trade-off parameter that balances between both objectives of the learner, which are routing as many units as possible to the destination and maintaining low congestion levels. We initially follow the experimental protocol described in Appendix~E of~\cite{sessa2020learning}, except that we compare the average dynamic cumulative regret~\eqref{Eq:DynamicRegret} across methods: note that Sessa et al.\ target a different regret notion (defined in~\eqref{Eq:SessaRegret}), which compares performance to a single best action in hindsight.

In our framework, the number of additional queries at the beginning of each window scales as~$Q_t = \mathcal{O}(\log^d t)$, where~$d=76$ is the dimension of~$\mathcal{X}$. This dependence would result in an impractically large number of additional queries. To manage this issue, we first consider the effective dimension of the dataset explaining~$95\%$ of the variance, which gives~$d_{\text{eff}}=7$. However, even this choice leads to a large amount of additional information. In practice, we therefore set~$d=2$, which empirically results in similar performance while drastically reducing the number of additional queries. The corresponding results are shown in Figure~\ref{Fig:BiLevelNoDrift}.

\begin{figure}
     \centering
     \begin{subfigure}[b]{0.32\textwidth}
         \centering
         \includegraphics[width=\textwidth]{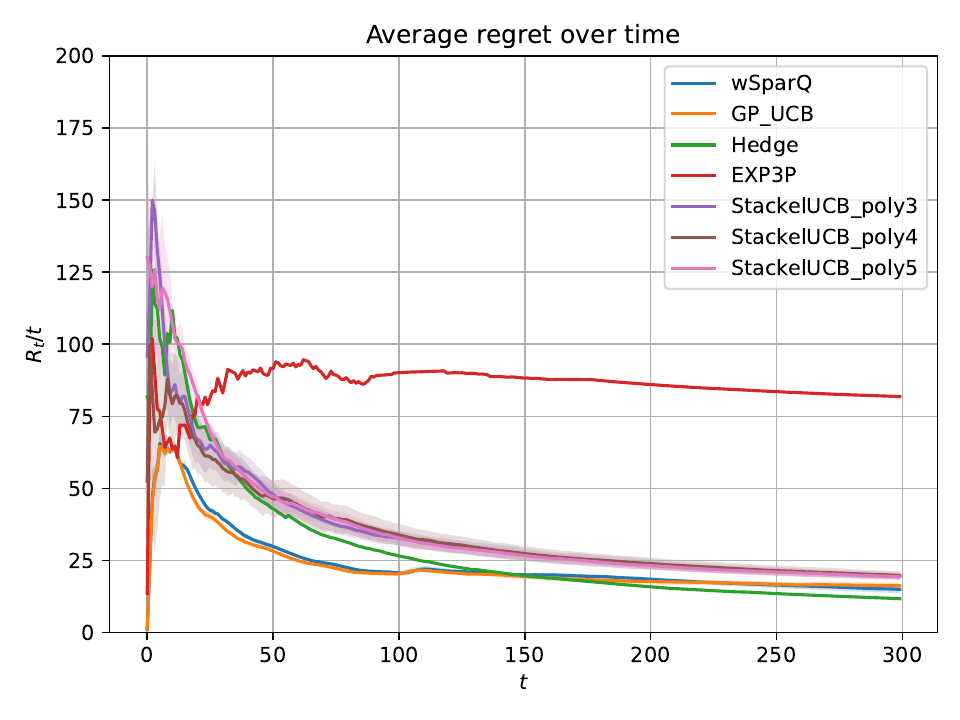}
         \caption{$\beta_t = 0.5,\; d=7$}
         \label{Fig:NoDriftFixedBetad7}
     \end{subfigure}
     \hfill
     \begin{subfigure}[b]{0.32\textwidth}
         \centering
         \includegraphics[width=\textwidth]{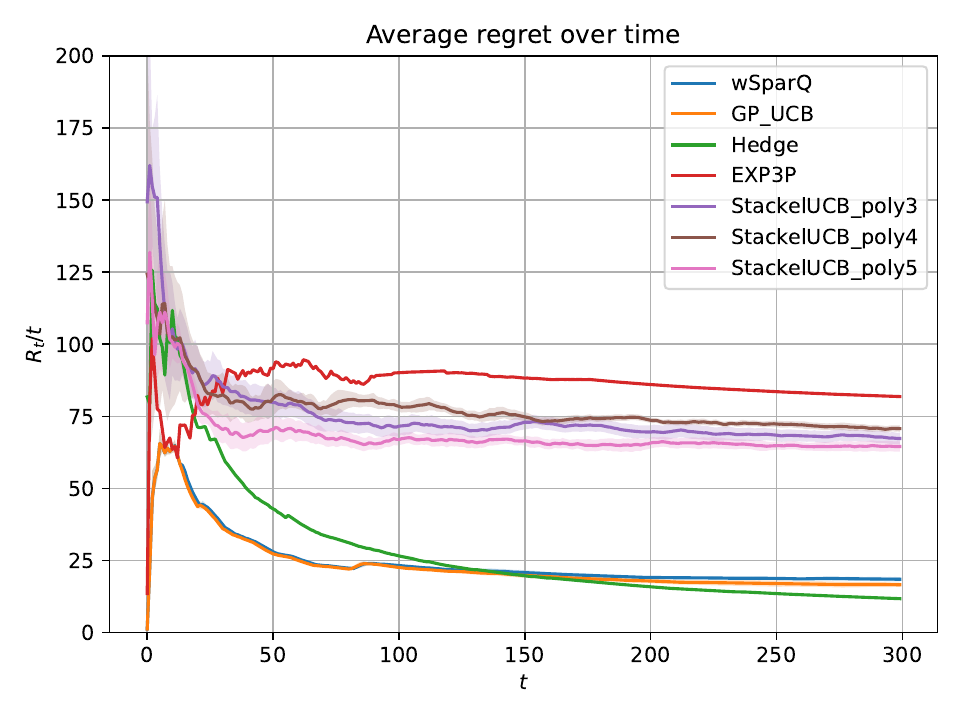}
         \caption{$\beta_t = 0.5,\; d=2$}
         \label{Fig:NoDriftFixedBetad2}
     \end{subfigure}
     \hfill
     \begin{subfigure}[b]{0.32\textwidth}
         \centering
         \includegraphics[width=\textwidth]{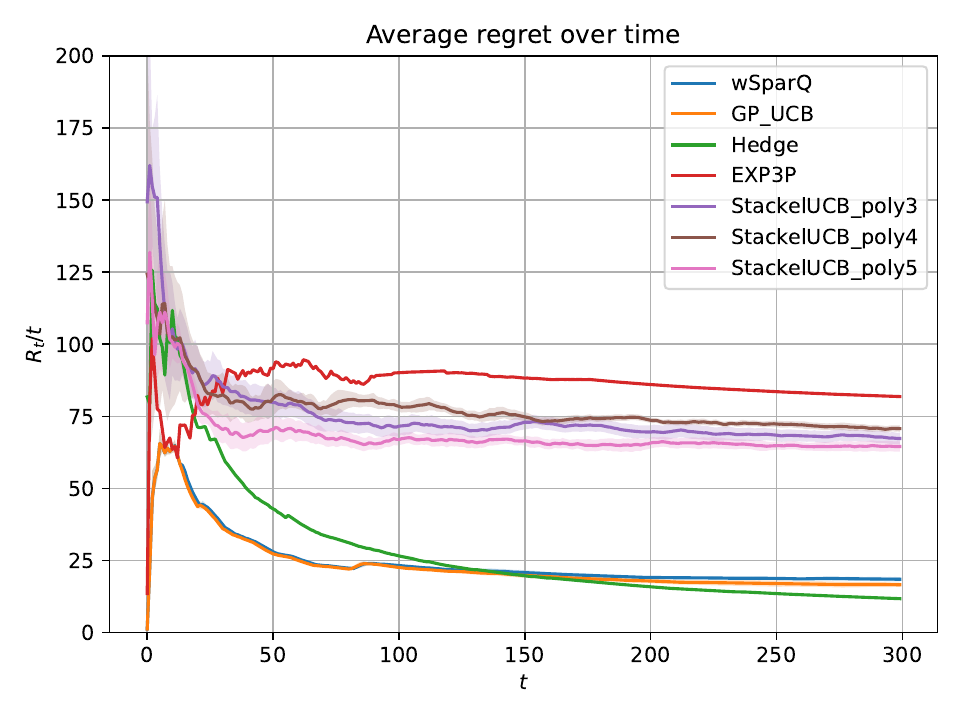}
         \caption{$\beta_t = \log(t),\; d=2$}
         \label{Fig:NoDriftLogBetad2}
     \end{subfigure}
     \caption{Average dynamic cumulative regret for different choices of~$\beta_t$ and~$d$. Figure~\ref{Fig:NoDriftFixedBetad2} shows that \textbf{W-SparQ-BL} achieves similar regret with far fewer additional queries than predicted by theory. Figure~\ref{Fig:NoDriftLogBetad2} illustrates robustness of \textbf{GP-UCB} and \textbf{W-SparQ-BL} with respect to exploration rates. We manually set $\alpha = 1/7$. We compare our method with the algorithm of Sessa et al., \textbf{StackelUCB}, using different polynomial kernel degrees, as well as with the well-known multiplicative weights methods \textbf{EXP3}~\citep{auer2002nonstochastic} and \textbf{HEDGE}~\citep{freund1997decision}.}
     \label{Fig:BiLevelNoDrift}
\end{figure}

While we observe on the plots that \textbf{StackelUCB} methods can actually track the optimum with precision in this setting, expanding the scope of application of their method, we can also see that \textbf{GP-UCB} achieves particularly good performances. Combined with the observation that reduced exploration leads to lower regret, this suggests that variations induced by changes in opponent types~$\theta_t$ can be exploited to learn a near-optimal stationary action. This observation is consistent with the choice of regret notion in~\cite{sessa2020learning}, which compares performance to the best fixed action in hindsight.

\subsection{Time-varying bilevel optimization}\label{App:ExpBL}

The lower-level mappings are constructed to illustrate three regimes. In the stationary setting, the response is smooth and time-invariant:
\[
g(x,t) = \big(\sin(2\pi x), \cos(2\pi x)\big),
\]
which lies naturally in the associated RKHS. 

In the moderate variation setting, we introduce smooth temporal drifts through phase shifts:
\[
g(x,t) = \big(\sin(2\pi x + 0.5t), \cos(2\pi x - 0.03t)\big),
\]
which preserves spatial regularity while inducing a controlled temporal evolution consistent with the noise model in~\eqref{Eq:TVModel} with~$\alpha=1$. 

Finally, fast variation setting combines spatial non-smoothness with stronger temporal drift:
\[
g(x,t) =
\begin{cases}
2x + 0.2\sqrt{t},\; -2x + 0.1t, & x < 0.5, \\
-2x + 2 + 0.2\sqrt{t},\; 2x - 2 + 0.1t, & x \geq 0.5,
\end{cases}
\]
which challenges the RKHS assumption and induces rapidly increasing effective noise.

The reward function is defined as
\[
f(x,y) = - (x - 0.5)^2 - \tfrac{1}{2}\|y\|^2,
\]
which is Lipschitz continuous and allows to directly translate response estimation errors into regret.

\section{Proofs of the theoretical results}\label{App:proofs}

\begin{proof}[Proof of Proposition~\ref{Prop:responseReg}]
Let~$t_1<t_2$,~$x\in\mathcal{X}$, and~$\delta>0$. By Assumption~\ref{As:opponentVar}, 
\begin{equation}\label{Eq:ProbaTheta} 
\mathbb{P}\!\left(|\theta_{t_1}-\theta_{t_2}|>\frac{\delta}{L_g}\right) \leq 2\exp\!\left(-\frac{\delta^2}{2\sigma^2|t_2-t_1|^{\alpha}}\right). 
\end{equation}
By~$L_g$-Lipschitz continuity of~$g$,
\begin{align*} 
|g_{t_2}(x)-g_{t_1}(x)| =|g(x,\theta_{t_2})-g(x,\theta_{t_1})| \leq L_g|\theta_{t_2}-\theta_{t_1}|. 
\end{align*} 
Combining the two inequalities yields the result. 
\end{proof}

\begin{proof}[Proof of Theorem~\ref{Thm:BiLevelRegret}] 
Let~$0< \delta <1$. We start by showing that the bilevel optimization setting can be cast into the time-varying GP framework studied previously. Assumption~\ref{As:ResponseRegularity} and Proposition~\ref{Prop:responseReg} allow to work in the same framework as~\cite{mauduit2025no}.

As a consequence, observations collected at earlier times provide noisy information about future functions, with a noise level that increases as a function of the time lag. We therefore leverage the following time-varying noise model: 
\begin{equation}\label{Eq:TVModel} 
\forall t_1 \leq t_2, \quad y_{t_1} = g_{t_2}(x_{t_1}) + \epsilon_{t_1, t_2}, 
\end{equation} 
where~$\epsilon_{t_1,t_2}$ is a zero-mean sub-Gaussian random variable with variance~$\sigma^2\left( 1 + (t_2-t_1)^\alpha \right)$, and, for fixed~$t_2$, the collection~$\{\epsilon_{t_1, t_2}\}_{t_1<t_2}$ is independent. This corresponds exactly to the model~\eqref{Eq:TVModel} applied to the function sequence~$(g_t)_t$. 

At step~$t$, we construct noise-aware posterior estimates~$\mu_t$ and~$\sigma_t^2$ of~$g_{t+1}$ using a stationary squared exponential kernel: 
\begin{align} 
\mu_t(x) &= k_t(x)^\top (K_t + \Sigma_t)^{-1} Y_t, \\ 
\sigma_t^2(x) &= k(x,x) - k_t(x)^\top (K_t + \Sigma_t)^{-1} k_t(x), 
\end{align} 
where~$\Sigma_t = \diag\left((\sigma^2(1+i^\alpha))_{i=1}^{t}\right)$ is the time-dependent noise matrix,~${K_t = [k(x_i,x_j)]_{i,j \leq t}}$ is the kernel matrix, and~$k_t(x) = [k(x_1,x), \ldots,k(x_t,x)]^\top$. 

Let
\[\beta_t = \sqrt{2 \log \left(  \frac{|\Sigma_t + K_t|^{1/2}}{\delta|K_t|^{1/2}} \right)} + B
\]
and define the confidence bounds for all~$x \in \mathcal{X}$: 
\begin{equation*} 
\text{ucb}_{t+1}(x) = \mu_{t}(x)+\beta_{t+1}\sigma_{t}(x),\quad \text{lcb}_{t+1}(x) = \mu_{t}(x)-\beta_{t+1}\sigma_{t}(x). 
\end{equation*} 
Lemma~7 from \cite{kirschner2018informationdirectedsamplingbandits} guarantees that, with probability at least~$1-\delta/2$, 
\begin{align*} 
\forall x \in \mathcal{X}, \quad g_{t+1}(x) \in [\text{lcb}_{t+1}(x), \text{ucb}_{t+1}(x)]. 
\end{align*} 
Let~$T>1$ be a finite horizon. The dynamic cumulative regret is defined as 
\begin{align*} 
R_T = \sum_{t=1}^T \max_{x \in \mathcal{X}} f(x, g_t(x)) - \sum_{t=1}^T f(x_t, g_t(x_t)). 
\end{align*} 
The instantaneous regret at time~$t$ can be written as 
\begin{align} 
r_t &= \max_{x \in \mathcal{X}} f(x, y_t) - f(x_t, y_t)\\ 
& = f(x_t^*, y_t) - f(x_t, y_t). \label{Eq:InstReg1} 
\end{align} 
With probability at least~$1-\delta/2$,~$y_t \in\mathcal{U}_t(x) = [\text{lcb}_{t}(x), \text{ucb}_{t}(x)]$, and therefore 
\begin{equation}\label{Eq:InstReg2} 
f(x_t, y_t) \in \left[\min_{y \in \mathcal{U}_t(x_t)} f(x_t, y), \max_{y \in \mathcal{U}_t(x_t)} f(x_t, y)\right]. 
\end{equation} 
Combining~\eqref{Eq:InstReg1} and~\eqref{Eq:InstReg2}, we obtain with probability at least~$1-\delta/2$: 
\begin{align*} 
r_t &\leq \max_{y \in \mathcal{U}_t(x_t^*)} f(x_t^*, y) - \min_{y \in \mathcal{U}_t(x_t)} f(x_t, y) \\ 
& \leq \max_{y \in \mathcal{U}_t(x_t)} f(x_t, y) - \min_{y \in \mathcal{U}_t(x_t)} f(x_t, y), 
\end{align*} 
where the second inequality follows from the action selection rule 
\begin{align*} 
x_t = \argmax_{x \in \mathcal{X}} \max_{y \in \mathcal{U}_t(x)} f(x,y). 
\end{align*} 
Define 
\[
y^m_t = \argmin_{y \in \mathcal{U}_t(x_t)} f(x_t,y), \qquad y^M_t = \argmax_{y \in \mathcal{U}_t(x_t)} f(x_t,y). 
\]
Then 
\begin{align} 
r_t &\leq f(x_t, y^M_t) - f(x_t,y^m_t)\\ 
& \leq L_r \|(x_t-x_t, y^M_t - y^m_t)\|_1 \label{Eq:LrLipsch}\\ 
& \leq L_r \left( \text{ucb}_t(x_t) - \text{lcb}_t(x_t) \right)\\ 
& = 2 L_r \beta_t \sigma_{t-1}(x_t), \label{Eq:WidthConfBounds} 
\end{align} where~\eqref{Eq:LrLipsch} follows from the $L_f$-Lipschitz continuity of~$f$ with respect to~$\|\cdot\|_1$, and~\eqref{Eq:WidthConfBounds} from the definition of the confidence bounds. Summing over~$t$ gives 
\begin{equation*} 
R_T \leq 2 L_r \beta_T \sum_{t=1}^T \sigma_{t-1}(x_t). 
\end{equation*} 
The posterior variance~$\sigma_{t-1}$ is computed using the sparse windowed mechanism of \textbf{W-SparQ-GP-UCB}. The sum of posterior variance can be bounded with probability~$1-\delta/2$ as in Theorem~3.8~\citep{mauduit2025no}. Hence, with probability~$1-\delta$, 
\begin{align*} 
R_T = \mathcal{O}\left(\sqrt{T^{\tilde{\alpha}+1} d \log^{d+1 }T\left(\log \frac{1}{\delta} + d^2 \log^{d+1}\left(\log T\right)\right)}\right), 
\end{align*} 
which concludes the proof. \end{proof}

\begin{proof}[Proof of Proposition~\ref{prop:matern}]
\textbf{Step 1: Well-definedness.}
Fix $x \in \mathcal{X}$. By (A1), $g(x,\cdot)$ is $\mu$-strongly convex on $\mathcal{Y}$, hence admits a unique minimizer. This defines $\tilde{g}(x)$.

\textbf{Step 2: Regularity via implicit function theorem.}
Define
\[
F(x,y) := \nabla_y g(x,y).
\]
By (A2), $F \in H^s(\mathcal{X} \times \mathcal{Y})^m$. Moreover,
\[
F(x,\tilde{g}(x)) = 0, \quad \nabla_y F(x,\tilde{g}(x)) = \nabla^2_{yy} g(x,\tilde{g}(x)) \succeq \mu I_m.
\]
Hence, the Jacobian is uniformly invertible. The global implicit function theorem implies that $\tilde{g}$ is $\mathcal{C}^s$ on $\mathcal{X}$, with
\[
D_x \tilde{g}(x) = - \left[\nabla^2_{yy} g(x,\tilde{g}(x))\right]^{-1} \nabla^2_{xy} g(x,\tilde{g}(x)).
\]

\textbf{Step 3: Sobolev regularity.}
Since all derivatives of $g$ up to order $s+1$ are square-integrable, the above expression shows that derivatives of $\tilde{g}$ up to order $s$ are square-integrable as well. Hence $\tilde{g} \in H^s(\mathcal{X})^m$.

The identification with the Matérn RKHS concludes the proof.
\end{proof}

\begin{proof}[Proof of Theorem~\ref{Thm:UCBLRegret}]
In this proof, we use the following notation: for a vector~$v \in \R^d$, ve denote by~$v^i$ its $i$-th coordinate, with~$i \leq m$.

For each coordinate $i$, define the event
\[
\mathcal{E}_{t,i} := \{\forall x \in \mathcal{X}, \tilde{g}_i(x) \in \tilde{\mathcal{U}}_t^i(x)\}.
\]
By Lemma 7 of~\cite{kirschner2018informationdirectedsamplingbandits},
\[
\mathbb{P}(\mathcal{E}_{t,i}) \geq 1 - \frac{\delta}{2m}.
\]
By union bound, the event 
\begin{align*}
\mathcal{E}_t &= \{\forall x \in \mathcal{X}, \tg(x) \in \tilde{\mathcal{U}}_t(x)\}\\ &= \bigcap_{i=1}^m \mathcal{E}_{t,i},
\end{align*}
holds with probability at least~$1-\frac{\delta}{2}$.
We consider the instantaneous regret at~$t$ 
\[ r_t = \underset{x \in \mathcal{X}}{\max} f(x,y_t) - f(x_t, y_t), 
\] 
then, with probability at least~$1-\frac{\delta}{2}$, 
\[ r_t(x_t, y_t) \in \left[\underset{y \in \tilde{\mathcal U}_t(x_t)}{\min} f(x_t,y),\underset{y \in \tilde{\mathcal U}_t(x_t)}{\max} f(x_t,y)\right]. 
\] 
Then, if~$x^* = \underset{x \in \mathcal{X}}{\argmax} \, f(x)$, still with probability at least~$1-\frac{\delta}{2}$, 
\begin{align*} 
r_t &\leq \underset{y \in \tilde{\mathcal U}_t(x^*)}{\max} f(x^*,y) - \underset{y \in \tilde{\mathcal U}_t(x_t)}{\min} f(x_t,y)\\ 
&\leq \underset{y \in \tilde{\mathcal U}_t(x_t)}{\max} f(x_t,y) - \underset{y \in \tilde{\mathcal U}_t(x_t)}{\min} f(x_t,y), 
\end{align*} 
where the second inequality comes from the selection rule~\eqref{Eq:BiLevelRule}. If we let 
\[ y_{t,m} = \underset{y \in \tilde{\mathcal U}_t(x_t)}{\argmin} \, f(x_t,y), \qquad y_{t,M} =\underset{y \in \tilde{\mathcal U}_t(x_t)}{\argmax} \, f(x_t,y), 
\] 
by $L_f$-Lipschitzness of~$f$, 
\begin{align*} 
r_t &\leq L_f \left( \|x_t-x_t\|_1 + \|y_{t,M}-y_{t,m}\|_1\right) \\ 
&= L_f \sum_{i=1}^m \left( y_{t,M}^i-y_{t,m}^i \right)\\ 
&\leq L_r \sum_{i=1}^m \left( \text{ucb}_t^i(x_t) - \text{lcb}_t^i(x_t)\right) \\ 
&= L_r \sum_{i=1}^m 2 \beta_t \sigma^i_{t-1}(x_t). 
\end{align*} 
By summing over time horizon~$T$, we obtain 
\begin{align*} 
R_T &\leq 2 \beta_T L_f \sum_{t=1}^T \sum_{i=1}^m \sigma^i_{t-1}(x_t)\\ 
&= \sum_{i=1}^m R_{T,i}, 
\end{align*} 
with~$R_{T,i}$ the cumulative regret of the regular univariate \textbf{GP-UCB} on the $i$-th coordinate. 
Applying Theorem~3 from~\cite{srinivas2012information} and using~$\beta_T = \mathcal{O}\left( \sqrt{\gamma_{Q_T} + \log \frac{1}{\delta}} \right)$, we obtain 
\[
 R_T = \mathcal{O}\left( m \sqrt{T \gamma_T \left( 2 \log \frac{1}{\delta} + \gamma_T\right)}\right). 
\]
\end{proof}

\begin{proof}[Proof of Proposition~\ref{Prop:BiLevelTemporalVar}]
Fix $i$ and $x \in \mathcal{X}$. By the reproducing property,
\[
\tilde{g}_{t+n}^i(x) - \tilde{g}_t^i(x)
= \langle \tilde{g}_{t+n}^i - \tilde{g}_t^i, k(x,\cdot) \rangle.
\]
Hence,
\[
|\tilde{g}_{t+n}^i(x) - \tilde{g}_t^i(x)|
\leq \|\tilde{g}_{t+n}^i - \tilde{g}_t^i\|_{\Hk} \|k(x,\cdot)\|_{\Hk}.
\]
Using $\|k(x,\cdot)\|_{\Hk}^2 = k(x,x) \leq M_k$,
\[
\leq \sqrt{M_k} \|\tilde{g}_{t+n}^i - \tilde{g}_t^i\|_{\Hk}.
\]
Finally, by triangle inequality and uniform boundedness,
\[
\|\tilde{g}_{t+n}^i - \tilde{g}_t^i\|_{\Hk}
\leq \sum_{s=t}^{t+n-1} \|\tilde{g}_{s+1}^i - \tilde{g}_s^i\|_{\Hk}
\leq 2 n B.
\]
\end{proof}

\begin{proof}[Proof of Theorem~\ref{Thm:WSPARQBLRegret}]
The proof follows the same ideas as those from Theorems~\ref{Thm:BiLevelRegret} and~\ref{Thm:UCBLRegret}. 

By performing \textbf{W-SparQ-GP-UCB} element wise, with appropriate noise variance proxy control through parameter $\tilde\alpha$ and collecting a sufficient number of additional queries at the beginning of each window, we bound the element-wise dynamic cumulative regrets 
\[
R_{T,i} \leq 2 L_f \beta_T \sum_{t=1}^T \mathbf{\sigma}^i_{t-1}(x_t),
\]
where the posterior variance $\mathbf{\sigma}^i_{t-1}(.)$ is computed using the sparse windowed mechanism of \textbf{W-SparQ-GP-UCB} adapted for the Matérn-$\nu$ kernel, with the noise proxy bounded by $\tilde{\sigma}_t^2 = \mathcal{O}(t^{\tilde{\alpha}})$, yielding
\[
R_{T,i} = \mathcal{O}\left( \sqrt{T^{\tilde{\alpha}+1} \gamma_T \left(\gamma_{Q_T} + \log \frac{1}{\delta}\right)}\right),
\]
where $Q_T = \mathcal{O}(\log^d(T))$ for the SE kernel and $Q_T = \mathcal{O}\left(T^{\frac{2d}{2\nu-d}}\right)$
Now, following the same steps as in proof of Theorem~\ref{Thm:UCBLRegret}, we can bound the regret of \textbf{W-SparQ-BL} as
\begin{align*}
R_T &\leq \sum_{i=1}^m R_{T,i}\\
&= \mathcal{O}\left(m \sqrt{T^{\tilde{\alpha}+1} \gamma_T \left(\gamma_{Q_T} + \log \frac{1}{\delta}\right)}\right).
\end{align*}
\end{proof}

\end{document}

%% file: macros.tex
\usepackage{amsmath}    
\usepackage{amssymb}    
\usepackage{amsfonts}
\usepackage{amsthm}     
\usepackage{mathtools}  
\usepackage{dsfont}     
\DeclareMathAlphabet{\mathcal}{OMS}{cmsy}{m}{n} 

\theoremstyle{plain}
\newtheorem{theorem}{Theorem}[section]
\newtheorem{proposition}[theorem]{Proposition}

\theoremstyle{definition}

\newtheorem{assumption}[theorem]{Assumption}

\theoremstyle{remark}

\usepackage{algorithm}
\usepackage{algpseudocode}

\usepackage{dirtree}
\usepackage{graphicx}   
\usepackage{float}      
\usepackage{subcaption} 
\usepackage{epstopdf}   
\usepackage{caption}

\usepackage{booktabs}   
\usepackage{multirow}
\usepackage{adjustbox}
\usepackage{threeparttable}
\usepackage{array}
\usepackage{arydshln}   

\usepackage{enumitem}
\setlist[enumerate]{leftmargin=.5in}
\setlist[itemize]{leftmargin=.5in}

\usepackage{xcolor}
\definecolor{myblue}{HTML}{1F77B4}
\definecolor{myred}{HTML}{D62728}
\definecolor{mygreen}{HTML}{2CA02C}

\usepackage{tikz}
\usetikzlibrary{arrows.meta,arrows,shapes,snakes,automata,backgrounds,petri,positioning,decorations.markings}
\usepackage{quantikz} 

\usepackage{graphicx}
\usepackage{ragged2e}
\usepackage{comment}
\usepackage{cases}
\usepackage{mathabx} 
\usepackage{dsfont}  
\usepackage{pdfpages}

\providecommand{\Dset}{\mathcal{D}}

\providecommand{\Xs}{{\color{myred}X}^\mathrm{s}}
\providecommand{\Ys}{{\color{myred}Y}^\mathrm{s}}
\providecommand{\Thetas}{{\color{myred}\Theta}^\mathrm{s}}

\providecommand{\Xr}{{\color{mygreen}X}^\mathrm{r}}
\providecommand{\Yr}{{\color{mygreen}Y}^\mathrm{r}}
\providecommand{\Thetar}{{\color{mygreen}\Theta}^\mathrm{r}}

\newcommand{\R}{\mathbb{R}}

\newcommand{\tg}{\tilde{g}}
\newcommand{\Hk}{\mathcal{H}}

\DeclareMathOperator*{\argmax}{arg\,max}
\DeclareMathOperator*{\argmin}{arg\,min}
\DeclareMathOperator{\diag}{diag}

\usepackage{cleveref} 
\crefname{assumption}{Assumption}{Assumptions}
\creflabelformat{equation}{#2#1#3}

